\theoremstyle{plain}
\newtheorem{theor}{Theorem}[section]
\newtheorem{prop}[theor]{Proposition}
\newtheorem{defn}[theor]{Definition}
\newtheorem{cor}[theor]{Corollary}
\newtheorem{lemma}[theor]{Lemma}
\newtheorem{rem}[theor]{Remark}
\title{Minimal dispersion of large volume boxes in the cube}
\author{Kurt S. MacKay}
\newcommand\address{\noindent\leavevmode

\medskip
\noindent
Kurt  S. MacKay\\
Dept.~of Math.~and Stat.~Sciences,\\
University of Alberta, \\
Edmonton, AB, Canada, T6G 2G1.\\
\texttt{\small
e-mail:  ksmackay@protonmail.com}
}
\begin{document}
\maketitle
\begin{abstract}
In this note we present a construction which improves the best known bound on the minimal dispersion of large volume boxes in the unit cube.
Let $d>1$. 
The dispersion of $T \subset [0,1]^d$ is defined as the supremum of the volume taken over all axis parallel boxes in the cube which do not intersect~$T$.
The minimal dispersion of $n$ points in the cube is defined as the infimum of the dispersion taken over all $T$ such that $|T| = n$.
Define the ``large volume" regime as the class of all volumes $\frac{1}{4} < r \leq \frac{1}{2}$. 
The inverse of the minimal dispersion is denoted as $N(r,d)$.
When the volume is large, the best known upper bound on $N(r,d)$ is of the order 
$ {(r - \frac{1}{4})}^{-1}.$
%
The construction presented in this note yields an
 upper bound given by 
$$N(r,d) \leq \left \lfloor  \frac{\pi}{\sqrt{r - \frac{1}{4}}} \right \rfloor  - 3 .$$
%
Some of our intermediate estimates are sharp given the condition that $d \geq C_{r}$, where $C_r$ is a positive constant which depends only on the volume~$r$.

\end{abstract}

\section{Introduction}
The dispersion of $T \subset [0,1]^d$ is defined as the supremum of the volume taken over all axis parallel boxes in the cube which do not intersect~$T$.
The minimal dispersion of $n$ points in the cube is defined as the infimum of the dispersion taken over all $T$ such that $|T| = n$.
The inverse of the minimal dispersion, denoted as $N(r,d)$, is the size of the smallest set of points which intersects any axis parallel box with volume exceeding $r$.
The problem of estimating the minimal dispersion (which was originally defined in \cite{UM} as modification of a concept in \cite{EH}) has been given considerable attention in recent years. Some contemporary works such as
\cite{C}, \cite{BA}, \cite{D}, \cite{AD}, \cite{AP} \cite{KR}, \cite{LV}, \cite{LL}, \cite{NR}, \cite{PR}, \cite{RT}, \cite{S}, \cite{UV}, \cite{UV1}, are focused on this problem, or modifications (general $k$-dispersion, spherical dispersion, etc.) thereof. We will refer to these works when we discuss the known results, and the best known bounds on the minimal dispersion. The dispersion of some particular sets has been studied in \cite{K}, \cite{T}, \cite{U}. Dispersion is of interest in studying topics in subjects such as Discrete Geometry and Approximation Theory and in studying particular random point configurations as in \cite{RT}. Define the ``large volume" regime as the class of all volumes $\frac{1}{4} < r \leq \frac{1}{2}$.  When the volume is large, the best known upper bound given in \cite{S},
is of the order $ {(r - \frac{1}{4})}^{-1}.$
In this note we present a construction which improves the best known bound in this setting.
First, consider the situation when $r \geq \frac{1}{2}$. The minimal dispersion is attained with one point at the center of the cube.
Thus, it is clear that in the large volume regime we are interested in estimating the minimal dispersion when $ r < \frac{1}{2}$.
Theorem \ref{thm1} improves the best known bound previously given by Sosnovec in \cite{S}.
\begin{theor}\label{thm1} 
Let $d > 1$, and let $r \in (\frac{1}{4},\frac{1}{2})$. 
Then 
\begin{equation}\label{eq1}
N(r,d) \leq \left \lfloor  \frac{\pi}{\sqrt{r - \frac{1}{4}}} \right \rfloor  - 3.
\end{equation}
\end{theor}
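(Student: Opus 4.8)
The plan is to produce the piercing set explicitly as points on the main diagonal of the cube, reducing the $d$-dimensional problem to a one-dimensional covering problem controlled by a Möbius recursion. Write a competing box as $B=\prod_{i=1}^{d} I_i$ with $I_i=[a_i,b_i]$, and put $\Lambda=\max_i a_i$ and $R=\min_i b_i$, so that $\bigcap_i I_i=[\Lambda,R]$. A diagonal point $(c,\dots,c)$ lies in $B$ exactly when $c\in[\Lambda,R]$. Thus if I choose values $c_1<\dots<c_m$ meeting every interval $[\Lambda,R]$ that can come from a box of volume exceeding $r$, the set $\{(c_k,\dots,c_k):1\le k\le m\}$ pierces all such boxes and gives $N(r,d)\le m$; the value $c=\tfrac12$ is included and accounts for all boxes containing the centre.

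The first step is the structural inequality that renders the argument independent of $d$: every box with $\operatorname{vol}(B)>r$ satisfies \[ (1-\Lambda)\,R>r. \] To see this, let $i^{*}$ achieve $\Lambda=a_{i^{*}}$ and $j^{*}$ achieve $R=b_{j^{*}}$. Because every side length is at most $1$, the product of just these two widths already dominates the volume, $w_{i^{*}}w_{j^{*}}\ge\operatorname{vol}(B)>r$, while $w_{i^{*}}=b_{i^{*}}-\Lambda\le 1-\Lambda$ and $w_{j^{*}}=R-a_{j^{*}}\le R$; hence $(1-\Lambda)R\ge w_{i^{*}}w_{j^{*}}>r$ (the case $i^{*}=j^{*}$ is immediate). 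It therefore suffices to meet every interval $[\Lambda,R]\subseteq[0,1]$ with $(1-\Lambda)R>r$, a condition with no reference to the dimension.

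The second step resolves this covering problem. Two consecutive values leave an admissible interval uncovered precisely when $(1-c_k)c_{k+1}>r$, so the tightest spacing is dictated by the Möbius map $c\mapsto r/(1-c)$, which for $r>\tfrac14$ has no real fixed point. I would linearise it through $c=\tfrac12+\sqrt{r-\tfrac14}\,\tan\theta$: the tangent addition formula converts $c_{k+1}=r/(1-c_k)$ into the rigid rotation $\theta_{k+1}=\theta_k+\arctan\!\big(2\sqrt{r-\tfrac14}\big)$. Placing the points at the angular spacing $\sqrt{r-\tfrac14}\le\arctan\!\big(2\sqrt{r-\tfrac14}\big)$ keeps every gap admissible, while the angles required to carry $c$ from $r$ up to $1-r$ occupy a range below $\pi$. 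Dividing this range by the spacing, and using the boundary requirements $c_1\le r$ and $c_m\ge 1-r$ to discard the outermost steps, produces $m\le\big\lfloor\pi/\sqrt{r-\tfrac14}\big\rfloor-3$.

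I expect the main obstacle to be the exact bookkeeping that turns the angular count into the stated closed form. One must compute the endpoint angles $\pm\arctan\!\big(\tfrac{1-2r}{2\sqrt{r-1/4}}\big)$, compare the chosen spacing with the maximal admissible spacing $\arctan(2\sqrt{r-1/4})$ through the elementary inequality $\arctan(2x)\ge x$ on $(0,\tfrac12]$, and correctly absorb the floor together with the three boundary steps. A further delicate point is the sharpness assertion: for an interval with $(1-\Lambda)R=r$ to be realised by an honest box one needs enough coordinates to carry the two extremal widths while the remaining directions contribute a product near $1$, and quantifying this is exactly where the hypothesis $d\ge C_r$ enters.
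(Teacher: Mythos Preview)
Your plan is correct and in fact reproduces the paper's diagonal configuration: setting $c_1=r$ and $c_{k+1}=r/(1-c_k)$ until $c_m\ge 1-r$ is exactly the sequence $f_0(r),f_1(r),\dots,f_{n_r}(r)$ that the paper builds. Where you diverge is in the two supporting arguments. For the geometric step, the paper classifies every box avoiding the configuration as ``Type~1'' or ``Type~2'' via a multi-case algorithm (Lemmas~\ref{lem1}--\ref{lem2}); your two-line observation $\mathrm{vol}(B)\le w_{i^*}w_{j^*}\le(1-\Lambda)R$ collapses that entire classification into a single inequality and makes the dimension-independence transparent from the outset. For the counting step, the paper identifies the recursion as a geometric rational sequence, quotes Brand's periodicity theorem to pin down the jump points $r_n=\tfrac14\sec^2(\pi/(n+3))$, and only then extracts the closed form; your conjugation $c=\tfrac12+\sqrt{r-\tfrac14}\,\tan\theta$ reaches the same rotation angle $\phi=\arctan\!\big(2\sqrt{r-1/4}\big)=\arccos\!\big(1/(2\sqrt r)\big)$ directly and is self-contained. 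What your route buys is brevity and elementarity; what the paper's route buys is an exact step-function description $\alpha(r)=k$ on $[r_k,r_{k-1})$, which feeds the later sharpness section.

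Two small points of caution. First, the throwaway remark that ``the value $c=\tfrac12$ is included'' is not generally true (e.g.\ $r=0.4$ gives the two-point set $\{0.4,\,2/3\}$) and is in any case unnecessary once you have the structural inequality. Second, the passage about ``placing the points at the angular spacing $\sqrt{r-1/4}$'' reads as though you intend to use that smaller step; you do not---the construction uses step $\phi$, and $\sqrt{r-1/4}\le\phi$ enters only at the end to pass from $\pi/\phi$ to $\pi/\sqrt{r-1/4}$. The honest bookkeeping for the ``$-3$'' is precisely what you flag: in angular coordinates the orbit $0\mapsto r\mapsto\cdots\mapsto 1-r\mapsto 1\mapsto\infty\mapsto 0$ closes after $\pi/\phi$ steps, and the three values $0,1,\infty$ lie outside $[r,1-r]$, so the number of retained points is $\lceil\pi/\phi\rceil-3$. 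Carrying this through with the endpoint angles $\theta(r)=2\phi-\tfrac\pi2$ and $\theta(1-r)=\tfrac\pi2-2\phi$ makes the count rigorous and then the inequality $\arctan(2x)\ge x$ finishes exactly as you say.
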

Let $r \in (\frac{1}{4},\frac{1}{2})$, and let $\textbf{1} = (1,1,\ldots,1) \in \mathbb{R}^d$. 
We construct a set of points on the diagonal in the following way. Consider the fractional sequence 
$$
\mathfrak{Q}(r) =  \bigg\{  r , \  \frac{r}{1 - r}, \  \frac{r}{1 - \frac{r}{1-r}} \ ,  \ 
\frac{r}{1 - \frac{r}{1-\frac{r}{1-r}}},  \ 
\frac{r}{1 - \frac{r}{1-\frac{r}{1-\frac{r}{1-r}}}}, \ 
 \ldots \bigg\}
.$$
Denote the subsequent elements in the sequence as $q_1,q_2, \ldots  \in \mathfrak{Q}(r)$.
We show that there exists a smallest number $n: = n_r \geq 1$  such that $$q_n \geq 1 - r.$$
Consider the configuration given by $$ \mathfrak{q}(r) = \{q_i \mathbf{1} : 1\leq i \leq n \}.$$ 
\newpage 
\noindent
We can see some examples of the configurations when $|\mathfrak{q}(r)| = 5,12,19,26$ (respectively). They are given below. \\
{\includegraphics[scale = .40,trim={5cm 1cm 5cm 1cm},clip]{c1.png}}
{\includegraphics[scale = .40,trim={5cm 1cm 5cm 1cm},clip]{c2.png}}\\
{\includegraphics[scale = .40,trim={5cm 1cm 5cm 1cm},clip]{c3.png}}
{\includegraphics[scale = .40,trim={5cm 1cm 5cm 1cm},clip]{c4.png}}
\newpage
\noindent
Define the function 
\begin{equation}\label{step}
r \longmapsto|\mathfrak{q}(r)|
\end{equation}
where $|\cdot|$ denotes the cardinality. 
\noindent
\\
It may not be obvious yet, but we can see what the function looks like (a monotone decreasing step function) on some inclusion-ordered intervals in the examples given below. \\
{\includegraphics[scale = .40,trim={5.925cm 1cm 5.923cm 1cm},clip]{1.png}}
{\includegraphics[scale = .4,trim={5.922cm 1cm 5.921cm 1cm},clip]{2.png}}\\
{\includegraphics[scale = .4,trim={5.95cm 1cm 5.921cm 1cm},clip]{3.png}}
{\includegraphics[scale = .4,trim={5.922cm 1cm 5.93cm 1cm},clip]{4.png}}
\\
We obtain Theorem \ref{thm1} by showing that this function is an upper bound for the minimal dispersion.
Finally, we show that some of our estimates are sharp given that $d \geq C_{r}$, where $C_r$ is a positive constant dependent on $r$.
\section{Previous results}
Recall that the inverse of the minimal dispersion, denoted as $N(r,d)$ is defined as the smallest number of points that are needed to intersect any axis parallel box whose volume exceeds $r$.  

In their work \cite{C}, Aistleitner, Hinrichs, and Rudolf proved a lower bound for $r < \frac{1}{4}$ given by
 \begin{equation} \label{eq:R1}
(1 - 4r)\frac{\log_{2}d}{4r} \leq N(r,d).
\end{equation}
This result gives a non-trivial estimate showing that the dispersion asymptotically increases with dimension 
when the volume is not large.
The upper bound given by
\begin{equation} \label{eq:R3}
 N(r,d)\leq \frac{2^{7d + 1}}{r}
\end{equation}
was proved by Larcher, and is also presented in \cite{C}.
This is an improvement on the bound given by Rote and Tichy in \cite{RT}.
The upper bound given by
\begin{equation} \label{eq:R2}
N(r,d) \leq \frac{8d}{r} \log_{2} \bigg(\frac{33}{r} \bigg)
\end{equation}
is a consequence of a more general result given in \cite{BE}.
The authors present an argument which uses the $VC-$dimension of $\mathfrak{B}$ (which is $2d$) instead of the ambient dimension $d$.
In the paper \cite{Ru}, Rudolf presented a probabilistic argument which yields the bound in (\ref{eq:R2}).
The bound in (\ref{eq:R2}) is an improvement on (\ref{eq:R3}) under the assumption that $r \geq \exp(-C^d)$ where $C > 1$ is an absolute constant.
Sosnovec in \cite{S} obtained another upper bound which is better when $d$ grows to infinity, namely
\begin{equation}
N(r,d) \leq C_r \log_2d.
\end{equation}
The constant $C_r$ obtained in \cite{S} grows extremely fast with $r$.
This constant was improved by Ullrich and Vyb\`iral \cite{UV} who showed that $$C_r = \frac{2^7}{r^2} \log_{2}^2 \big(\frac{1}{r} \big).$$
Litvak in \cite{LV} gives an improvement on the known bounds when $r \leq \exp(-d)$, and showed that
$$ N(r,d) \leq \frac{C \ln d}{r} \ln \big(\frac{1}{r} \big).$$
Litvak also established that when $r \geq (\ln^2d)/(d \ln \ln(2d))$ that
$$N(r,d) \leq \frac{C \ln d}{r^2} \ln \big(\frac{1}{r} \big),$$
which is an improvement on the bound given by Ullrich and Vyb\`iral.
Recently, Litvak and Livshyts in \cite{LL}, proved an upper bound for $d\geq 2$, and $r \in (0,\frac{1}{2}]$ given by 
\begin{equation} \label{eq:LL}
N(r,d) \leq 12e \frac{4d \log( \log(\frac{8}{r})) + \log(1/r)}{r}.
\end{equation}
They also showed that a random choice of points with respect to the uniform distribution in the cube gives the result with high probability.
We mention here that Hinrichs, Krieg, Kunsch, and Rudolph in \cite{AD} showed that using a random choice of points uniformly distributed within the cube, one cannot expect to get anything better than
$$\max \bigg\{  \frac{c}{r} \log \bigg(  \frac{1}{r} \bigg), \frac{d}{2r} \bigg\}.$$
Thus we can see that in the probabilistic setting, (\ref{eq:LL}) is close to the best possible.
Now we turn our attention to the large volume regime $r > \frac{1}{4}$.
Sosnovec in \cite{S} gave a dimension independent upper bound for $N(r,d)$.
In particular, he proved that
for  $r \in (\frac{1}{4}, 1)$, and $d \geq 2$, $$N(r , d)  \leq \left \lfloor {\frac{1}{r - \frac{1}{4}} } \right \rfloor + 1.$$
\noindent
The goal of this paper is to improve this bound. This is established in Theorem \ref{thm1}.
\subsection{Definitions and Notation}
Let $d \geq 1$. 
Denote unit cube as $[0,1]^d$.
Let $|\cdot|$ denote the cardinality, let $\textrm{dom}(\cdot)$ denote the domain,
and let $\textbf{1} := (1,1,\ldots,1) \in \mathbb{R}^d$.
The set of axis parallel boxes is defined as
$$\mathfrak{B} := \bigg \{ {\prod}_{i = 1}^{d} I_i: \  I_i = [a_i,b_i) \subset [0,1]  \bigg \}.$$
%
%
The dispersion of $T \subset [0,1]^d$ is defined as $$\textrm{disp}(T) := \sup_{B\in \mathfrak{B}, \ B\cap T = \emptyset}\textrm{Vol}(B).$$
The minimal dispersion is defined as $$\textrm{disp}^{*}(n,d) := \inf_{T \subset [0,1]^d, \  |T| = n}\textrm{disp}(T).$$
Let $r \in [0,1]$. The inverse of the minimal dispersion is defined as $$N(r,d) := \min\{n \in \mathbb{N}:\textrm{disp}^{*}(n,d) \leq r\}.$$
Let $k \geq 0$.
Inductively define a sequence of functions $\{f_k\}_{k \geq 0}$ in the following way.
Let $\beta_0 = 1$.
Define $f_0:[0,1]\rightarrow[0,1]$ as the identity $f_0(x) = x$.
Given  functions $f_0, f_1, \ldots f_{k-1}$, and numbers $\beta_0,\beta_1, \ldots , \beta_{k-1}$, define
\begin{equation}\label{beta}
\beta_k = \inf\{x  \geq 0: x \in \textrm{dom}(f_{k-1}),\  f_{k-1}(x) = 1 \},
\end{equation}
and define $f_k: [0,\beta_k) \rightarrow \mathbb{R}$ by
\begin{equation}\label{f}
f_k(x) = \frac{x}{1 - f_{k-1}(x)}.
\end{equation}
Proposition \ref{pro1} shows that the infimum in (\ref{beta}) is attained for all~$k~\geq~0$.
Let $k \geq 0$.
It is clear that $\textrm{dom}(f_k) \subset \textrm{dom}(f_{k-1})$, hence  $\beta_{k} < \beta_{k-1}$.
Let $r \in (\frac{1}{4},\frac{1}{2}]$ (we include the the right endpoint here since the intermediate estimates are still valid here).
The following definition is a complete description of the step function introduced in (\ref{step}).
Define
\begin{equation}\label{afunc}
\alpha(r) := \inf\{k \geq 0 :r \in \textrm{dom}(f_{k}),\ f_k(r) \geq 1 - r\} + 1,
\end{equation}
and define
\begin{equation} \label{eq:alpha}
n_r := \alpha(r) - 1.
\end{equation}
In Remark \ref{inf} we show that the infimum in (\ref{afunc}) is attained.
%
\section{Auxiliary  tools}\label{aux}

\begin{prop}\label{inc1}
Let $i  \geq 0$. The function $f_i$ is strictly increasing on its domain.
\end{prop}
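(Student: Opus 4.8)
The plan is to prove the statement by induction on $i$, using the recursive definition of the functions $f_i$ given in (\ref{f}). The base case $i = 0$ is immediate, since $f_0(x) = x$ is the identity, which is strictly increasing on $[0,1]$. For the inductive step, I would assume that $f_{i-1}$ is strictly increasing on its domain $[0,\beta_{i-1})$ and aim to deduce that $f_i(x) = \frac{x}{1 - f_{i-1}(x)}$ is strictly increasing on $[0,\beta_i)$.

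First I would record the key structural facts that make the quotient well-behaved on $[0,\beta_i)$. By the definition (\ref{beta}), $\beta_i$ is the first point where $f_{i-1}$ reaches the value $1$; hence for every $x \in [0,\beta_i)$ we have $f_{i-1}(x) < 1$, so the denominator $1 - f_{i-1}(x)$ is strictly positive and $f_i$ is well defined there. I would also note that the numerator $x \geq 0$ is nonnegative, so $f_i(x) \geq 0$ throughout. These sign conditions are exactly what is needed to control the monotonicity of the quotient.

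The main argument is then to show monotonicity directly. Take $0 \leq x < y < \beta_i$. The numerator satisfies $x < y$, so it strictly increases. For the denominator, the inductive hypothesis gives $f_{i-1}(x) < f_{i-1}(y)$, hence $1 - f_{i-1}(x) > 1 - f_{i-1}(y) > 0$; that is, the positive denominator strictly decreases as the argument grows. A quotient of a nonnegative, strictly increasing numerator by a positive, strictly decreasing denominator is strictly increasing: writing $f_i(x) = x \cdot \frac{1}{1 - f_{i-1}(x)}$, both factors are nonnegative and nondecreasing with at least one strict at each step, which yields $f_i(x) < f_i(y)$. To handle the corner case $x = 0$ cleanly I would observe $f_i(0) = 0$ while $f_i(y) > 0$ for $y > 0$, so strict monotonicity holds there as well.

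I expect the only delicate point to be the careful bookkeeping of domains and the positivity of the denominator, rather than the monotonicity inequality itself, which is elementary. In particular, I would want to make sure the inductive hypothesis is applied on the correct domain: since $\textrm{dom}(f_i) = [0,\beta_i) \subset [0,\beta_{i-1}) = \textrm{dom}(f_{i-1})$, the comparison $f_{i-1}(x) < f_{i-1}(y)$ is legitimate for the chosen $x,y$. This containment is the one place where I would invoke the earlier observation that $\beta_i < \beta_{i-1}$ and that the domains are nested, ensuring that the inductive hypothesis has content exactly where it is needed.
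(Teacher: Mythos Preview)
Your proposal is correct and follows essentially the same inductive argument as the paper: verify the base case $f_0(x)=x$, then for the inductive step use that the numerator strictly increases while the positive denominator $1-f_{i-1}(x)$ strictly decreases to conclude $f_i(x)<f_i(y)$. If anything, you are more explicit than the paper about the positivity of the denominator and the domain inclusion $[0,\beta_i)\subset[0,\beta_{i-1})$, which the paper leaves largely implicit.
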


\begin{proof}
Employ induction on $i$. 
Let $i = 0$.
By definition $\mathrm{dom}(f_0)= [0,1]$. For all $x \in [0,1]$, $f_0(x) = x$.
The base case is seen to be trivial.
Assume that the proposition holds for $ i = 0,1,2,\ldots, k$.
Let $x_1, x_2 \in \textrm{dom}(f_{k+1})$ be such that $x_1 < x_2 $.
Then by domain inclusion $x_1, x_2 \in \textrm{dom}(f_{k})$.
By the induction hypothesis $f_{k}(x_1) < f_{k}(x_2)$. 
It follows from the definition of $f_{k+1}$ as in (\ref{f}) that
$$f_{k+1}(x_1) = \frac{x_1}{1 - f_{k}(x_1)}  < \frac{x_2}{1 - f_{k}(x_2)} = f_{k+1}(x_2).$$
This proves the proposition.
\end{proof}
\noindent
In Proposition \ref{pro1} we show that the infimum in (\ref{beta}) is attained.
\begin{prop}\label{pro1}
Let $r_0 = 1$. 
For each $i \geq 1$ there exists a unique number $$r_i \in \mathrm{dom}(f_i) = [0,\beta_i)$$ with the properties
\begin{equation}\label{P1}
\beta_i = r_{i-1}
\end{equation}
\begin{equation}\label{P2}
f_{i-1}(r_i) = 1 - r_i
\end{equation}
\begin{equation}\label{P3}
 f_{i}(r_i) = 1.
\end{equation}

\end{prop}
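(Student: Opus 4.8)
The plan is to argue by induction on $i$, constructing $r_i$ as the unique point where $f_i$ attains the value $1$ and simultaneously verifying that the defining infimum for $\beta_{i+1}$ is attained. The guiding observation is that (\ref{P2}) and (\ref{P3}) are logically equivalent for any $x \in \mathrm{dom}(f_i)$: since the denominator $1 - f_{i-1}(x)$ is nonzero on the domain, the equation $f_i(x) = \frac{x}{1 - f_{i-1}(x)} = 1$ rearranges to $f_{i-1}(x) = 1 - x$. Thus in each step it suffices to produce a unique solution of a single scalar equation.

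For the base case $i = 1$, the equation $f_0(r_1) = 1 - r_1$ reads $r_1 = 1 - r_1$, giving $r_1 = \tfrac{1}{2}$; one checks directly that $f_1(\tfrac{1}{2}) = 1$, and that $\beta_1 = 1 = r_0$ because $f_0(x) = x$ equals $1$ only at $x = 1$. Strict monotonicity of $f_1$ (Proposition \ref{inc1}) gives uniqueness.

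For the inductive step, assume the claim through level $k$. First I would establish (\ref{P1}) at level $k+1$: by the inductive hypothesis $f_k(r_k) = 1$ with $r_k \in \mathrm{dom}(f_k)$, and since $f_k$ is strictly increasing, $r_k$ is the \emph{only} point of the domain where $f_k = 1$. Hence the infimum in (\ref{beta}) defining $\beta_{k+1}$ is attained and equals $r_k$, so $\mathrm{dom}(f_{k+1}) = [0, r_k)$. To locate $r_{k+1}$, I set $g(x) = f_k(x) + x - 1$ on the closed interval $[0, r_k]$; here $f_k$ is continuous (being, inductively, a quotient of continuous functions with nonvanishing denominator on its domain) and strictly increasing, so $g$ is continuous and strictly increasing. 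One has $f_k(0) = 0$ (immediate by induction), whence $g(0) = -1 < 0$, while $g(r_k) = f_k(r_k) + r_k - 1 = r_k > 0$. By the intermediate value theorem together with strict monotonicity, $g$ has a unique zero $r_{k+1} \in (0, r_k)$.

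It remains to confirm this $r_{k+1}$ has the three properties. Since $r_{k+1} < r_k = \beta_{k+1}$, we have $r_{k+1} \in \mathrm{dom}(f_{k+1})$; the zero condition is exactly (\ref{P2}) at level $k+1$, and substituting it into (\ref{f}) yields $f_{k+1}(r_{k+1}) = r_{k+1}/(1-(1-r_{k+1})) = 1$, which is (\ref{P3}). Uniqueness again follows from strict monotonicity. The main subtlety to handle carefully is the interlocking of the two assertions: establishing that the infimum for $\beta_{k+1}$ is attained relies on the already-constructed solution $r_k$ from the previous level, so the attainment claim and the existence of $r_i$ must be carried through the induction together rather than proved in isolation.
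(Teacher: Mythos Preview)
Your proof is correct and follows essentially the same approach as the paper: both argue by induction, locate $r_{k+1}$ as the unique zero of $f_k(x) - (1-x)$ via the intermediate value theorem on $[0,r_k]$ using $f_k(0)=0$ and $f_k(r_k)=1$, and invoke Proposition~\ref{inc1} for uniqueness. Your write-up is in fact slightly more careful than the paper's in making the equivalence of (\ref{P2}) and (\ref{P3}) explicit, in noting the continuity of $f_k$, and in articulating how the attainment of $\beta_{k+1}$ and the construction of $r_k$ must be carried together through the induction.
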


\begin{proof}
Note that $\beta_1  = 1 = r_0$.
Employ induction on $n$.
For each $n > 0$ we produce a number $r_n$ with the properties (\ref{P1}), (\ref{P2}), (\ref{P3}).

Let $n = 1$.
Recall the definition of $f_2$ given in (\ref{f}),  by $$f_2(x) = \frac{x}{1 - f_1(x)}.$$
We will show that there exists $r_1 \in$ $\mathrm{dom}(f_1)$ such that $$1 = f_1(r_1) = \frac{r_1}{1 - f_0(r_1)}.$$
Equivalently, we find the solution to the equation $f_0(x) - (1 - x) = 0.$
It is clear that $r_1 = \frac{1}{2} < 1 = \beta_1$, and that $ \beta_2 = r_1$.
This implies that $\mathrm{dom}(f_2) = [0,\beta_2).$ Hence, $r_1$ has properties (\ref{P1}), (\ref{P2}), (\ref{P3}).

Let $n = 2$. 
Recall the definition of $f_3$ given in (\ref{f}), by $$f_3(x) = \frac{x}{1 - f_2(x)}.$$
We show that there exists $r_2 \in$ $\mathrm{dom}(f_2)$ such that $$1 = f_2(r_2) = \frac{r_2}{1 - f_1(r_2)}.$$
Equivalently, we show that there exists a  solution to the equation $$f_1(x) - (1 - x) = 0.$$
The function $f_1$ is strictly increasing by Proposition \ref{inc1}.
Note that $f_1(0) =  0$, and $f_1(r_1) = 1 $.
Apply the Intermediate Value Theorem to $f_1(x) - (1 - x)$. This yields a unique solution $r_2< r_1$ such that
 $$f_1(r_2) - (1 - r_2) = 0.$$
It follows that $ \beta_3 = r_2$, and that $\mathrm{dom}(f_3) = [0,\beta_3).$
Hence $r_2$ has properties (\ref{P1}), (\ref{P2}), (\ref{P3}).

Let $k > 2$. Assume that there exist numbers $r_0,r_1,r_2,\ldots,r_{k-1}$ with the properties (\ref{P1}), (\ref{P2}), (\ref{P3}).
Under the assumption that  $\mathrm{dom}(f_{k}) = [0,\beta_{k})$ where $\beta_{k} = r_{k-1}$,
and  $f_{k-1}(r_{k-1}) = 1$.
Recall the definition of $f_{k+1}$ given in (\ref{f}), by $$f_{k+1}(x) = \frac{x}{1 - f_k(x)}.$$
We show that there exists $r_k \in$ $\mathrm{dom}(f_k)$, such that $$1 = f_k(r_k) = \frac{r_k}{1 - f_{k-1}(r_k)}.$$
Equivalently, we show that there exists a solution to the equation $$f_{k-1}(x) - (1 - x) = 0.$$
The function $f_{k-1}$ is strictly increasing by Proposition \ref{inc1}.
Note that $f_{k-1}(0) =  0,$ and by the induction hypothesis, $f_{k-1}(r_{k-1}) = 1.$
Apply the Intermediate Value Theorem to $f_{k-1}(x) - (1 - x)$.  This yields a unique solution $r_{k} < r_{k-1}$,  such that $$f_{k-1}(r_k) - (1 - r_k) = 0.$$ It follows that $ \beta_{k+1} = r_k$, and that $\mathrm{dom}(f_{k+1}) = [0,\beta_{k+1}).$
This yields a number $r_{k+1}$ with the properties (\ref{P1}), (\ref{P2}), (\ref{P3}).
This proves the proposition.
\end{proof}

\begin{rem}
For each $k \geq 1$, the infimum in the definition of  $\beta_k $ is attained at $\beta_k = r_{k-1}$.
Proposition \ref{pro1} justifies the assertion following the definition in (\ref{beta}).
Fix the sequence
\begin{equation} \label{eq:r}
\{r_m\}_{m \geq 0} =  \{ \beta_{m+1}\}_{m \geq0}.
\end{equation} 
\end{rem}

\begin{prop}\label{pro0}
Let $n \geq 1$. 
Let  $r \in (\frac{1}{4},\frac{1}{2})$ be such that $f_{i}(r) < 1$ for all $i \leq n$.
Then for all $i \leq n$, $$f_{i-1}(r) < f_{i}(r).$$
\end{prop}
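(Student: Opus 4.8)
The plan is to fix an index $i$ with $1 \leq i \leq n$ and verify the single inequality $f_{i-1}(r) < f_i(r)$ directly from the recursion (\ref{f}); since the argument applies verbatim to each such $i$, no genuine induction on $i$ is needed.

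First I would set $a := f_{i-1}(r)$ and record the two facts the argument rests on. Because the hypothesis asserts $f_i(r) < 1$, we have $r \in \textrm{dom}(f_i) \subset \textrm{dom}(f_{i-1})$ by domain inclusion, so $a$ is well defined; moreover $i-1 \leq n$, so the hypothesis (or, for $i = 1$, the trivial bound $f_0(r) = r < \tfrac12$) gives $a < 1$, whence $1 - a > 0$. By definition (\ref{f}), $f_i(r) = \dfrac{r}{1 - a}$.

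The heart of the argument is then a short chain of equivalences, obtained by clearing the denominator using the \emph{positive} quantity $1 - a$:
\[
f_{i-1}(r) < f_i(r) \iff a < \frac{r}{1-a} \iff a(1-a) < r \iff a - a^2 < r.
\]
At this point I would invoke the elementary identity $a - a^2 = \tfrac14 - \left(a - \tfrac12\right)^2 \leq \tfrac14$, valid for every real $a$. Since $r > \tfrac14$ by assumption, this yields $a - a^2 \leq \tfrac14 < r$, which is precisely the rightmost inequality; unwinding the chain gives $f_{i-1}(r) < f_i(r)$, as desired.

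The only step demanding care — and the closest thing to an obstacle — is the sign bookkeeping in the middle equivalence: the passage $a < \frac{r}{1-a} \iff a(1-a) < r$ is legitimate only because $1 - a > 0$, which is exactly where the standing hypothesis $f_{i-1}(r) < 1$ is used. Everything else is routine; the one point worth confirming explicitly is that $f_{i-1}(r) < 1$ holds for every $i$ in range, including the boundary case $i = 1$, where it follows from $f_0(r) = r < \tfrac12$ rather than from the hypothesis on the $f_i$ with positive index.
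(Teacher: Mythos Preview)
Your proof is correct, but it follows a genuinely different route from the paper's. The paper argues by induction on $i$: from the hypothesis $f_{k-1}(r) < f_k(r)$ and the assumption $f_k(r) < 1$ it deduces $1 - f_{k-1}(r) > 1 - f_k(r) > 0$, hence
\[
f_k(r) = \frac{r}{1 - f_{k-1}(r)} < \frac{r}{1 - f_k(r)} = f_{k+1}(r),
\]
with the base case $f_0(r) = r < r/(1-r) = f_1(r)$. Your argument instead dispenses with induction entirely: setting $a = f_{i-1}(r)$ you observe directly that $a(1-a) \leq \tfrac14 < r$, which after dividing by the positive quantity $1-a$ yields $a < r/(1-a) = f_i(r)$. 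The advantage of your approach is that it isolates exactly where the lower bound $r > \tfrac14$ is used and shows that each inequality $f_{i-1}(r) < f_i(r)$ holds on its own, independent of the preceding ones; the paper's inductive argument, by contrast, makes the monotonicity of the whole finite sequence $\{f_i(r)\}$ more transparent as a single chain, but somewhat obscures the role of the hypothesis $r > \tfrac14$ (which in the paper enters only implicitly through the standing assumption on~$r$).
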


\begin{proof}

Fix $n \geq 1 $.
Employ induction on $i$. 
Let $r \in (\frac{1}{4} ,\frac{1}{2})$ be such that $f_{i}(r) < 1$ for all $i \leq n$.
Let $i = 1$.
Recall the definitions of $f_0,$ $f_1$.
Since $r < \frac{1}{2}$, it follows that $$f_0(r) = r < \frac{r}{1-r} = f_1(r).$$
Let $1 \leq k < n$. 
Assume as the induction hypothesis that  for all $1 \leq  i \leq k$, $$f_{i-1}(r) < f_{i}(r).$$ 
By assumption $f_{k-1}(r) < f_{k}(r)$,
then by definition of $f_k$ it follows that
$$f_{k}(r) = \frac{r}{1 - f_{k-1}(r)} < \frac{r}{1 - f_{k}(r)} = f_{k+1}(r).$$
This proves the proposition.
\end{proof}

\begin{cor}\label{cor0}
Let $r \in (\frac{1}{4},\frac{1}{2}]$.
Let $n  \geq 1 $ be such that $r < r_n$. 
Then for all $i \leq n$, $$f_{i-1}(r) < f_{i}(r).$$
\end{cor}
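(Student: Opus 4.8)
The plan is to deduce the corollary directly from Proposition \ref{pro0}, whose conclusion is word-for-word identical; the only difference lies in the hypothesis. Proposition \ref{pro0} assumes $f_i(r) < 1$ for all $i \leq n$, whereas here we are handed the single inequality $r < r_n$. So the entire task reduces to showing that, for $r \in (\frac14, \frac12)$, the assumption $r < r_n$ forces $f_i(r) < 1$ for every $i \leq n$; once this is in hand, Proposition \ref{pro0} delivers the conclusion verbatim.

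To carry this out, I would first record that the sequence $\{r_m\}_{m \geq 0}$ is strictly decreasing. This is immediate from Proposition \ref{pro1}, which produces each $r_k$ as the solution satisfying $r_k < r_{k-1}$; thus $1 = r_0 > r_1 = \frac12 > r_2 > \cdots$. Consequently, for every index $i$ with $i \leq n$ we have $r_i \geq r_n$, and hence $r < r_n \leq r_i$. Next I would verify the two facts needed to compare $f_i(r)$ with $1$: that $r$ lies in $\textrm{dom}(f_i)$, and that $f_i(r_i) = 1$. The former holds because $\textrm{dom}(f_i) = [0,\beta_i) = [0, r_{i-1})$ together with $r < r_i < r_{i-1}$; the latter is exactly property \eqref{P3} of Proposition \ref{pro1}. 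Since $f_i$ is strictly increasing on its domain by Proposition \ref{inc1}, the inequality $r < r_i$ yields $f_i(r) < f_i(r_i) = 1$. For the index $i = 0$ one simply notes $f_0(r) = r < 1$. This establishes $f_i(r) < 1$ for all $i \leq n$, as required.

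The only point needing separate attention is the endpoint $r = \frac12$, which is permitted by the corollary but excluded from the open-interval hypothesis of Proposition \ref{pro0}. Here I would observe that since $r_1 = \frac12$ and the sequence $\{r_m\}$ is decreasing, we have $r_n \leq \frac12$ for every $n \geq 1$; therefore the hypothesis $r < r_n$ can never be met when $r = \frac12$, and the statement is vacuously true at this endpoint. For all genuinely relevant values $r \in (\frac14, \frac12)$ the reduction above applies and Proposition \ref{pro0} can be invoked.

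I do not anticipate a serious obstacle in this argument: it is essentially bookkeeping about domains and the monotonicity of $\{r_m\}$. The one place to be careful is matching the half-open interval conventions---ensuring that $r$ genuinely lies in $\textrm{dom}(f_i)$ before invoking monotonicity of $f_i$, and disposing of the excluded endpoint $r = \frac12$ separately so that Proposition \ref{pro0}, stated on the open interval, can legitimately be applied.
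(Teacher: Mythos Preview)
Your proposal is correct and follows essentially the same route as the paper: establish that $r<r_n$ together with the monotonicity of $\{r_m\}$ and of each $f_k$ (Propositions \ref{pro1} and \ref{inc1}) forces $f_k(r)<f_k(r_k)=1$ for all $k\le n$, then invoke Proposition \ref{pro0}. Your treatment is in fact slightly more careful than the paper's, since you explicitly check domain membership and dispose of the endpoint $r=\tfrac12$ as vacuous.
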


\begin{proof}
Let $r \in (\frac{1}{4},\frac{1}{2}]$.
Let $n  \geq 1$ be such that $r < r_n$.
The sequence $\{r_m\}_{ m >0 }$ defined in (\ref{eq:r}) is decreasing. Hence, $r < r_n < r_{n-1}< \cdots <r_1$.
Property (\ref{P3}) in Proposition \ref{pro1} implies that $f_{k}(r_k) = 1$ for all $k \leq n$.
Since $r < r_k$, it follows by Proposition \ref{inc1} that $ f_{k}(r) < f_{k}(r_k)$.
Hence, 
$$ f_{k}(r) < f_{k}(r_k) = 1.$$
Now apply Proposition \ref{pro0}.
\end{proof}

%
\begin{rem}
Corollary \ref{cor0} gives the property that for all $n < k$, $f_n(r_k) < 1$.
Property (\ref{P3}) in Proposition \ref{pro1} gives that for all $k > 0$,  $f_{k}(r_k) = 1$. Let $r \in (\frac{1}{4},\frac{1}{2}]$, and recall the definition  in (\ref{afunc}) given by
$$\alpha(r) := \inf\{k \geq 0 :r \in \mathrm{dom}(f_{k}),\ f_k(r) \geq 1 - r\} + 1.$$
Then we have that for all $k>0$, $$ \alpha(r_k) = k = n_{r_k} + 1.$$ This gives the integral values of the step function in (\ref{step}) evaluated at the left endpoints of the interval partition given by 
\begin{equation}\label{intp}
\cdots[r_3,r_2) , [r_2,r_1),[r_1,1). 
\end{equation}
\end{rem}
The following proposition shows that the function in (\ref{afunc}) is constant over any interval in the partition (\ref{intp}).
\begin{prop}\label{pro6}
Let $i  \geq 1$. 
Let $r \in (\frac{1}{4},\frac{1}{2}]$ be such that $r_{i} \leq r < r_{i-1}$.
Then 
$$\alpha(r) = i .$$
\end{prop}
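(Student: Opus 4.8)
The plan is to compute the infimum in the definition (\ref{afunc}) of $\alpha(r)$ explicitly and show it equals $i-1$; since $\alpha(r)$ is that infimum increased by one, this gives $\alpha(r)=i$. The engine of the argument is the equivalence, valid for every $k\ge 0$ with $r\in\mathrm{dom}(f_k)$,
\[
f_k(r)\ge 1-r \quad\Longleftrightarrow\quad r\ge r_{k+1}.
\]
To prove this I would introduce $g_k(x):=f_k(x)+x-1$. It is strictly increasing on $\mathrm{dom}(f_k)$, being the sum of the strictly increasing function $f_k$ (Proposition \ref{inc1}) and the increasing function $x\mapsto x-1$. Property (\ref{P2}) of Proposition \ref{pro1} gives $f_k(r_{k+1})=1-r_{k+1}$, i.e. $g_k(r_{k+1})=0$, so $r_{k+1}$ is the unique zero of $g_k$; monotonicity then turns the sign condition $g_k(r)\ge 0$ into the position condition $r\ge r_{k+1}$, which is exactly the displayed equivalence.

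Granting the equivalence, the set in (\ref{afunc}) becomes $\{k\ge 0: r\in\mathrm{dom}(f_k),\ r\ge r_{k+1}\}$, and I would read off its minimum from the hypothesis $r_i\le r<r_{i-1}$ together with the fact that $\{r_m\}$ from (\ref{eq:r}) is strictly decreasing. First, $k=i-1$ belongs to the set: one checks $r\in\mathrm{dom}(f_{i-1})$ from $r<r_{i-1}\le\beta_{i-1}$, and the value condition holds since $r\ge r_i=r_{(i-1)+1}$. Second, no $k\le i-2$ belongs: for such $k$ we have $k+1\le i-1$, hence $r_{k+1}\ge r_{i-1}>r$, so the position condition $r\ge r_{k+1}$ fails (and $r\in\mathrm{dom}(f_k)$ holds because $r<r_{i-1}\le r_{k-1}$). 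Therefore the minimum of the set is $i-1$, the infimum in (\ref{afunc}) is attained and equals $i-1$, and $\alpha(r)=(i-1)+1=i$.

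I expect the genuine content to be the equivalence in the first paragraph; once the zero of $g_k$ is pinned at $r_{k+1}$, the rest is locating $r$ in the decreasing ladder $\cdots<r_{i}\le r<r_{i-1}<\cdots$ and is purely combinatorial. The delicate points are boundary cases of the index bookkeeping: for $i=1$ the interval $[r_1,r_0)$ meets $(\tfrac14,\tfrac12]$ only at $r=\tfrac12$, there are no indices $k\le i-2$ to exclude, and the membership of $k=0$ reduces to the direct inequality $f_0(\tfrac12)=\tfrac12\ge 1-\tfrac12$; moreover the domain checks at level $k=0$ use $\mathrm{dom}(f_0)=[0,1]$ and $\beta_0=1$ rather than the formula $\mathrm{dom}(f_k)=[0,r_{k-1})$. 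These cases should all be handled by recording $r_{-1}:=\beta_0=1$ and verifying the two inequalities directly.
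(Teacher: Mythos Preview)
Your proof is correct and follows essentially the same route as the paper's: both arguments rest on the monotonicity of $f_k$ (Proposition~\ref{inc1}), the identification of $r_{k+1}$ as the unique solution of $f_k(x)=1-x$ (Proposition~\ref{pro1}, property~(\ref{P2})), and the strict decrease of $\{r_m\}$. The only organizational difference is that you package these into the clean equivalence $f_k(r)\ge 1-r \Leftrightarrow r\ge r_{k+1}$ via the auxiliary function $g_k$, which lets you treat every index $k\le i-2$ uniformly and thereby bypass the paper's appeal to Corollary~\ref{cor0} (the monotonicity of $k\mapsto f_k(r)$); the paper instead bounds $f_{i-2}(r)$ directly and then propagates the inequality downward. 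Both approaches are equally elementary, and yours is arguably a little tidier.
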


\begin{proof}
First, let $i = 1$, and let $r \in (\frac{1}{4},\frac{1}{2}]$ be such that $r_{1} \leq r < 1$.
Since $r_1 = \frac{1}{2}$, it follows that $r = \frac{1}{2}$. Hence, $\alpha(r) = 1$.

Let $i  \geq 2$, and let $r \in (\frac{1}{4},\frac{1}{2})$ be such that $r_i \leq r < r_{i-1}$.
Since $r < r_{i-1}$, Corollary \ref{cor0}  implies that for all $k  \leq i - 1$, $f_{k-1}(r) < f_{k}(r) $.
Apply Proposition \ref{inc1} on $f_{i-2}$ to get $f_{i-2}(r) < f_{i-2}(r_{i-1})$.
By Proposition \ref{pro1}, $$f_{i-2}(r_{i-1}) = 1-r_{i-1}.$$
It follows that $$f_{i-2}(r) < f_{i-2}(r_{i-1}) = 1 - r_i < 1 - r.$$
This means that for all $k \leq i-2$, $$f_{k-2}(r)  < 1 - r.$$ 
It follows that $ n_r > i - 2$.
Since $r_i \leq r$,  apply Proposition \ref{inc1} on $f_{i-1}$ to get  $f_{i-1}(r_i) < f_{i-1}(r)$.
Recall that by Proposition \ref{pro1}, $$f_{i-1}(r_{i}) = 1-r_{i}.$$
It follows that $$1 - r \leq 1 - r_i = f_{i-1}(r_i) < f_{i-1}(r).$$
Thus, $f_{i-1}(r) > 1 - r$.
It follows that $n_r \leq i - 1.$
Therefore, $n_r = i-1.$ This proves the proposition.
\end{proof}
\begin{rem}\label{inf}
Proposition \ref{pro6} shows that the function in (\ref{afunc}) is constant over the intervals in the partition from (\ref{intp}). This shows that for each $r \in (\frac{1}{4},\frac{1}{2}]$, the infimum in the definition of $n_r$ as in (\ref{eq:alpha}) is attained.  
\end{rem}
A brief discussion on Geometric Rational Sequences follows. We use the results herein to obtain explicit values for the numbers $\{r_k\}_{k \geq 1}$ as in (\ref{eq:r}).
The paper \cite{B} provides results which can be applied to sequences of the form defined below.

\begin{defn}
Let $r \in (\frac{1}{4},\frac{1}{2}]$.
A Geometric Rational Sequence $\{x_n(r)\}_{n\geq0}$ is defined by setting an initial condition $x_0(r) = r$, and recursively defining
$$x_{n+1} = \frac{r}{1 - x_n }.$$
If $x_n  = 1$, then define $x_{n+1} = \infty$, $x_{n+2} = 0$, so that $x_{n+3} = r$.
\end{defn}

\begin{defn}\label{def1}
Let $r \in (\frac{1}{4},\frac{1}{2}]$.
The reduced form of a Geometric Rational Sequence $\{y_n(r)\}_{n \geq 0}$ is defined by setting an initial condition $$y_0(r) = -1 + r,$$ 
and recursively defining $$y_{n+1}(r) = -1 - \frac{r}{y_n(r)}.$$
If $y_{n}(r) = 0$, then define  $y_{n+1}(r) = \infty$, $y_{n+2}(r) =  -1$, so that $y_{n+3}(r) =  -1 + r$.
\end{defn}

\begin{rem}\label{wy}
Note that if $y_{n+3}(r) =  -1 + r$, then $y_{n+2}(r) =  -1$. Hence, $y_{n+1}(r) = \infty$ and $y_n(r) = 0$.
This occurs if and only if the sequence $\{y_n(r)\}_{n \geq 0}$ is cyclical.
\end{rem}

\begin{rem}\label{yn}
Let $i \geq 1$, and let $r \leq r_i$.
Then,
$$y_0(r) = -1 + r,$$
$$y_1(r) = -1 + \frac{r}{1 - r} = -1 + f_1(r),$$
$$y_2(r) = -1 +  \frac{r}{1 -  f_1(r)} = -1 + f_{2}(r).$$
Continuing in this way, we see that for all $k < i$ and $r \leq r_i$,
$$y_k(r) = -1 + f_k(r).$$
\end{rem}

\begin{prop}\label{cyc}
Let $m > 0$. 
Then the reduced sequence $\{y_{n}(r_m)\}_{n\geq0}$ is cyclical with cycle length $m+3$.
\end{prop}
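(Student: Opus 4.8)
The plan is to translate the reduced sequence back into the language of the functions $f_k$, locate the first index at which it hits the value $0$, and then let the cyclic closure conventions of Definition \ref{def1} do the rest. The key input is that $r_m$ is precisely the argument at which $f_m$ attains $1$.

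First I would establish the dictionary $y_k(r_m) = -1 + f_k(r_m)$ for every $k \leq m$. Remark \ref{yn} already records this for $k < m$; to push it to $k=m$ I apply the recursion one further step. By Corollary \ref{cor0} (and the remark following it) we have $f_{m-1}(r_m) < 1$, so the term $y_{m-1}(r_m) = -1 + f_{m-1}(r_m)$ is nonzero and no special case is triggered. Hence
$$y_m(r_m) = -1 - \frac{r_m}{-1 + f_{m-1}(r_m)} = -1 + \frac{r_m}{1 - f_{m-1}(r_m)} = -1 + f_m(r_m).$$
Property (\ref{P3}) of Proposition \ref{pro1} gives $f_m(r_m) = 1$, and therefore $y_m(r_m) = 0$.

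Once $y_m(r_m) = 0$, the conventions in Definition \ref{def1} force $y_{m+1}(r_m) = \infty$, then $y_{m+2}(r_m) = -1$, and finally $y_{m+3}(r_m) = -1 + r_m = y_0(r_m)$. Since the recursion is deterministic, the coincidence $y_{m+3}(r_m) = y_0(r_m)$ propagates to every index, so the sequence is periodic with $m+3$ as a period; this is exactly the characterization of cyclicity stated in Remark \ref{wy}.

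It then remains to confirm that the \emph{minimal} period equals $m+3$ and not some proper divisor. For this I would observe that $0$ occurs exactly once in a single block $y_0, y_1, \ldots, y_{m+2}$: for $k < m$ the bound $f_k(r_m) < 1$ gives $y_k(r_m) = f_k(r_m) - 1 \in [-1,0)$, so $y_k(r_m) \neq 0$, while $y_{m+1}(r_m) = \infty$ and $y_{m+2}(r_m) = -1$ are likewise nonzero; thus the unique zero in the block sits at index $m$. Letting $p$ denote the minimal period, the standard fact that $p$ divides any period gives $p \mid (m+3)$, so the indices $n \equiv m \pmod{p}$ lying in $[0,m+3)$ number exactly $(m+3)/p$ and are all zeros. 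If $p < m+3$ this count is at least $2$, contradicting uniqueness of the zero in the block; hence $p = m+3$. The only delicate point of the argument is extending the $f_k$–$y_k$ dictionary across the boundary index $k=m$, where $f_m(r_m)=1$ places $y_m$ exactly at the value $0$ that activates the cyclic convention; the divisibility step ruling out a shorter period is then routine.
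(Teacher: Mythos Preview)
Your proof is correct and follows essentially the same route as the paper: translate $y_k(r_m)$ into $f_k(r_m)-1$, use $f_m(r_m)=1$ from Proposition~\ref{pro1} to locate the zero at index $m$, and then invoke the cyclic conventions of Definition~\ref{def1} to close the loop at $y_{m+3}=y_0$. Your treatment is in fact a bit more careful than the paper's in two spots: you explicitly justify extending the $y_k\leftrightarrow f_k$ dictionary to the boundary index $k=m$ (the paper simply asserts $y_m(r_m)=f_m(r_m)-1$), and for minimality of the period you use a clean zero-counting divisibility argument, whereas the paper appeals to the strict monotonicity of the $f_k(r_m)$ (Proposition~\ref{pro0}) to rule out repetitions among the early terms.
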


\begin{proof}
From Remark \ref{yn}, we have that for all $k < m$, $$y_k(r_m) =-1 + f_k(r_m).$$ 
Apply Proposition \ref{pro0} to yield $$f_{k-1}(r_{m}) < f_{k}(r_m)$$ for all $k < m$.
This guarantees no repetition in the first $m-1$ terms of the reduced sequence $\{y_{n}(r_m)\}_{n\geq0}$.
By Proposition \ref{pro1},  $f_{m}(r_m)  = 1$.
It follows that
$$y_m(r_m) = f_{m}(r_m) - 1  = 1 - 1 =0.$$
Recall the reduced sequence given in Definition \ref{def1}.
Then by definition
$$y_{m+1}(r_m) = \infty$$
$$y_{m+2}(r_m) = -1$$
$$y_{m+3}(r_m) = 1 - r_m.$$
This shows that $y_{m+3}(r_m) = 1 - r_m = y_0(r_m) $. It follows that the sequence is cyclical with cycle length $m+3$.
\end{proof}

The following theorem from \cite{B} will be used. Note that there is a typographical error in the condition $\sigma^2 < 4 \gamma$. 

\begin{theor}\label{thm0}
Let $\sigma$, $\gamma \in \mathbb{R}$ with $\sigma^2 < 4 \gamma$, 
and $\theta = \arccos{\frac{\sigma}{2\sqrt{\gamma}}}$. 
A sequence satisfying $y_{n+1} = \sigma - \frac{\gamma}{y_n}$, $y_1 \in \mathbb{R}$,
has a finite or infinite number of cluster points depending on whether or not $\frac{\theta}{\pi}$ is rational. 
Moreover, when $\frac{\theta}{\pi} = \frac{k}{m} \in \mathbb{Q}$ is irreducible, the sequence takes on $m$ distinct values $y_1,y_2,\ldots,y_m$ which are thereafter repeated in this order.
\end{theor}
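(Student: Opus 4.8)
The plan is to recognize the recurrence $y_{n+1} = \sigma - \gamma/y_n$ as the iteration of a Möbius transformation and to reduce its dynamics to a rotation of a circle. Associate to the map $T(y) = \sigma - \gamma/y$ the matrix $M = \begin{pmatrix} \sigma & -\gamma \\ 1 & 0 \end{pmatrix}$, so that $y_n = T^{n-1}(y_1)$ and the iterate $T^k$ acts on $\hat{\mathbb{C}} = \mathbb{C} \cup \{\infty\}$ as the fractional linear map associated to $M^k$. The characteristic polynomial of $M$ is $\lambda^2 - \sigma\lambda + \gamma$, whose discriminant $\sigma^2 - 4\gamma$ is negative by hypothesis; hence the eigenvalues are complex conjugates $\lambda_\pm = \sqrt{\gamma}\, e^{\pm i\theta}$ with $\cos\theta = \sigma/(2\sqrt{\gamma})$, which is precisely the $\theta$ appearing in the statement.

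First I would diagonalize $M$ over $\mathbb{C}$. The two fixed points $p, \bar p$ of $T$ are the (non-real) roots of $y^2 - \sigma y + \gamma = 0$ and are complex conjugates. Conjugating by the Möbius map $w = (y-p)/(y-\bar p)$, which sends $p \mapsto 0$ and $\bar p \mapsto \infty$, turns $T$ into the linear map $w \mapsto (\lambda_+/\lambda_-)\, w = e^{2i\theta} w$. Since $\sigma, \gamma$ are real, $T$ preserves the extended real line, and for real $y$ one computes $|w| = |y - p|/|y - \bar p| = 1$; thus the conjugating map carries $\hat{\mathbb{R}}$ onto the unit circle, and on this circle $T$ acts as rotation by the angle $2\theta$.

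With this reduction in hand, the dichotomy is the classical one for circle rotations. The orbit of $y_1$ is finite if and only if $e^{2in\theta} = 1$ for some $n \geq 1$, i.e.\ $n\theta/\pi \in \mathbb{Z}$, which occurs exactly when $\theta/\pi$ is rational; otherwise the rotation is irrational and the orbit is dense on the circle, so the cluster set is the whole circle and is infinite. When $\theta/\pi = k/m$ is in lowest terms, the smallest $n$ with $nk/m \in \mathbb{Z}$ is $n = m$ (here $\gcd(k,m) = 1$ is used), so the orbit is periodic of period exactly $m$ and takes $m$ distinct values, repeated in this order. Transferring back through the conjugating homeomorphism, which preserves both the cardinality of the cluster set and the number of distinct orbit values, yields the claim.

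I would expect the main technical point to be the verification that the conjugating coordinate change is legitimate for the given real initial data: in particular that the two fixed points are distinct (guaranteed by $\sigma^2 < 4\gamma$, so that $M$ is elliptic rather than parabolic) and that no orbit point ever coincides with a fixed point or a pole, which holds because the fixed points are non-real while the orbit remains in $\hat{\mathbb{R}}$. Once the conjugacy to a rotation is established cleanly, the remaining number-theoretic statement about the period is routine.
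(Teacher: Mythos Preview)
The paper does not prove this theorem; it is quoted as a known result from Brand~\cite{B} and used as a black box in Proposition~\ref{pro5}. So there is no ``paper's own proof'' to compare against.

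Your argument is the standard and correct one: linearize the M\"obius map $T(y)=\sigma-\gamma/y$ by conjugating the two complex-conjugate fixed points to $0$ and $\infty$, observe that $\hat{\mathbb{R}}$ is carried to the unit circle, and reduce to the rational/irrational rotation dichotomy. One small slip: with $w=(y-p)/(y-\bar p)$ and $p=\lambda_+$, a direct computation gives $w\circ T = (\lambda_-/\lambda_+)\,w = e^{-2i\theta}w$, not $e^{+2i\theta}w$; this is immaterial since only the order of the rotation matters. Your remark that the orbit lies in $\hat{\mathbb{R}}$ and hence never meets the non-real fixed points is exactly the point needed to conclude that the period on $\hat{\mathbb{R}}$ equals the order $m$ of the rotation, so that the $m$ values $y_1,\dots,y_m$ are genuinely distinct.
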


\begin{prop}\label{pro5}
Let $n \geq 1$. 
Let $$R_n = \frac{1}{4}\frac{1}{\cos^2(\frac{\pi}{n+3})}.$$ 
Then the reduced sequence $\{y_k(R_n)\}_{k \geq 0}$
has cycle length $n+3$.
\end{prop}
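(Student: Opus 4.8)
The plan is to connect the reduced Geometric Rational Sequence $\{y_k(R_n)\}$ to the framework of Theorem \ref{thm0} by matching the recurrence parameters, and then use the cyclicity criterion from that theorem. Recall from Definition \ref{def1} that the reduced sequence satisfies $y_{n+1}(r) = -1 - \frac{r}{y_n(r)}$. Comparing this with the recurrence $y_{n+1} = \sigma - \frac{\gamma}{y_n}$ in Theorem \ref{thm0}, I would read off the identification $\sigma = -1$ and $\gamma = r$. With $r = R_n$ this gives $\sigma = -1$ and $\gamma = R_n = \frac{1}{4}\frac{1}{\cos^2(\frac{\pi}{n+3})}$.

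First I would verify the hypothesis $\sigma^2 < 4\gamma$ of Theorem \ref{thm0} for $r = R_n$. Since $\sigma^2 = 1$, this amounts to checking $4 R_n > 1$, i.e. $\frac{1}{\cos^2(\frac{\pi}{n+3})} > 1$, which holds precisely because $\cos(\frac{\pi}{n+3}) \in (0,1)$ for every $n \geq 1$ (as $\frac{\pi}{n+3} \in (0,\frac{\pi}{2})$). Next I would compute the angle $\theta = \arccos\frac{\sigma}{2\sqrt{\gamma}}$. Substituting the values yields
\begin{equation}\label{eq:theta}
\theta = \arccos\left(\frac{-1}{2\sqrt{R_n}}\right) = \arccos\left(-\cos\tfrac{\pi}{n+3}\right) = \pi - \tfrac{\pi}{n+3},
\end{equation}
using $2\sqrt{R_n} = \frac{1}{\cos(\frac{\pi}{n+3})}$ and the identity $\arccos(-x) = \pi - \arccos(x)$. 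Hence $\frac{\theta}{\pi} = 1 - \frac{1}{n+3} = \frac{n+2}{n+3}$.

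The next step is to check that this rational is in lowest terms, so that Theorem \ref{thm0} identifies the exact period. Since consecutive integers are coprime, $\gcd(n+2, n+3) = 1$, so $\frac{\theta}{\pi} = \frac{n+2}{n+3}$ is already irreducible with denominator $m = n+3$. Theorem \ref{thm0} then asserts that the sequence takes on exactly $m = n+3$ distinct values, repeated in order, which is precisely the statement that the cycle length is $n+3$.

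I anticipate two minor obstacles rather than one major one. The first is the bookkeeping at the degenerate terms: Definition \ref{def1} handles the cases $y_n = 0$ (and the subsequent $\infty$, $-1$) by special convention, whereas Theorem \ref{thm0} is a clean recurrence; I would note that these degenerate terms arise exactly at the cyclic return (cf. Remark \ref{wy}) and do not affect the period count, or alternatively invoke the result for a non-degenerate starting value and transfer to $y_0(R_n) = -1 + R_n$ by the order-repetition clause of Theorem \ref{thm0}. The second is confirming the sign and branch in \eqref{eq:theta}: I must ensure $\frac{\sigma}{2\sqrt{\gamma}} = -\cos\frac{\pi}{n+3}$ lies in $[-1,1]$ so that $\arccos$ is well-defined, which again follows from $\cos\frac{\pi}{n+3} \in (0,1)$. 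Neither of these is deep, so the argument is essentially a direct parameter substitution into Theorem \ref{thm0} followed by the coprimality observation.
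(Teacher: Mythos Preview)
Your proposal is correct and follows essentially the same route as the paper: identify $\sigma=-1$, $\gamma=R_n$, compute $\theta=\arccos(-\cos\frac{\pi}{n+3})=\frac{(n+2)\pi}{n+3}$, and invoke Theorem~\ref{thm0}. In fact you are more careful than the paper, which omits both the verification of $\sigma^2<4\gamma$ and the coprimality check $\gcd(n+2,n+3)=1$ needed to read off the exact period from the ``moreover'' clause.
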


\begin{proof}
Let $n \geq  1$.
Let $$R_n = \frac{1}{4}\frac{1}{\cos^2(\frac{\pi}{n+3})}.$$ 
In reference to Theorem \ref{thm0}, the sequence
$\{y_k(R_n)\}_{k \geq 0}$ has the parameters $\sigma = -1$, and $\gamma = R_n$.
Apply Theorem \ref{thm0} with the given parameters, and set
$$\theta = \arccos{ \big( \frac{-1}{2\sqrt{R_n}} \big)} = \arccos{ \big( - \cos(\frac{\pi}{n+3}) \big)} = \frac{\pi(n + 2)}{n+3}.$$
Then
$$\frac{\theta}{\pi} \in \mathbb{Q}.$$
By Theorem \ref{thm0}, it follows that $\{y_{k}(R_n)\}_{k \geq 0}$ has cycle length $n+3$.
\end{proof}

\begin{rem}
Proposition \ref{pro5} gives a decreasing sequence of numbers $\{R_n\}_{n >0} \subset (\frac{1}{4},\frac{1}{2}]$
such that as $n$ goes to infinity $R_n \to \frac{1}{4}$. We show that these numbers correspond to the numbers $\{r_k\}_{k \geq 1}$ given in (\ref{eq:r}). These are exactly the values of the endpoints in the partition given in (\ref{intp}).
\end{rem}

\begin{prop}\label{Rn}
Recall the sequence $\{r_m\}_{m >0}$ defined in (\ref{eq:r}). Then for all $n > 0$, $$r_n = R_n.$$
\end{prop}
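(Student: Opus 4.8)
The plan is to prove $r_n = R_n$ by induction on $n$, collapsing the whole statement to the single equation $f_n(R_n) = 1$; once this is established, strict monotonicity of $f_n$ (Proposition~\ref{inc1}) together with property (\ref{P3}) and the uniqueness in Proposition~\ref{pro1} force $R_n = r_n$. The base case $n = 1$ is the direct computation $R_1 = \frac{1}{4\cos^2(\pi/4)} = \frac{1}{2} = r_1$, the last equality coming from the $n=1$ case of Proposition~\ref{pro1}.

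For the inductive step I would assume $r_j = R_j$ for all $j < n$ and locate the first zero of the reduced sequence $\{y_k(R_n)\}$. The key observation is that $y_0(R_n) = -1 + R_n$ is exactly the image of $-1$ under the Möbius map $T(y) = -1 - \tfrac{R_n}{y}$, since $T(-1) = -1 + R_n$, while the conventions give $T(\infty) = -1$ and $T(0) = \infty$. By Proposition~\ref{pro5} the sequence is periodic with cycle length $m = n+3$, and by Theorem~\ref{thm0} it takes $m$ distinct values $y_0,\ldots,y_{m-1}$ arranged in a single $T$-cycle. Because $T$ is a bijection of the projective line $\mathbb{R} \cup \{\infty\}$, the three predecessors of $y_0$ in this cycle are forced: $y_{m-1} = -1$, then $y_{m-2} = \infty$, then $y_{m-3} = 0$. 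Hence $y_n(R_n) = 0$, and since the $m$ cycle values are distinct, $y_k(R_n) \neq 0$ for $k < n$; that is, the first zero of the reduced sequence occurs at index exactly $n$.

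Next I would transfer this information to the functions $f_k$. The relation $y_k(R_n) = -1 + f_k(R_n)$ of Remark~\ref{yn} holds for every $k \le n$ provided $R_n \in \mathrm{dom}(f_n) = [0, r_{n-1})$, and this is precisely where the induction hypothesis is used: $\{R_m\}$ is strictly decreasing, so $R_n < R_{n-1} = r_{n-1} = \beta_n$, placing $R_n$ in $\mathrm{dom}(f_k)$ for all $k \le n$ by nestedness of the domains. Since the first zero is at index $n$, the identity propagates through $k = n$, yielding $0 = y_n(R_n) = -1 + f_n(R_n)$, i.e.\ $f_n(R_n) = 1$. As $f_n$ is strictly increasing, the equation $f_n = 1$ has at most one solution in its domain, and $r_n$ is such a solution by (\ref{P3}); therefore $R_n = r_n$, completing the induction.

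The main obstacle is the orbit observation in the second paragraph: one must argue carefully, from the bijectivity of $T$ and the $\{0,\infty,-1\}$ conventions, that any periodic reduced orbit necessarily contains this triple as the three immediate predecessors of $y_0$, and that the first zero therefore lands at index exactly $m-3 = n$ rather than earlier (this needs the distinctness of the $m$ cycle values from Theorem~\ref{thm0}). The second delicate point is the domain membership $R_n \in \mathrm{dom}(f_n)$: without the inductive equality $r_{n-1} = R_{n-1}$ one cannot a priori compare $R_n$ with $\beta_n = r_{n-1}$, which is exactly why the argument must be run as an induction rather than a single pass.
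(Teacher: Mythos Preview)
Your proposal is correct and follows essentially the same route as the paper: induction on $n$, using the inductive hypothesis $r_{n-1}=R_{n-1}$ together with $R_n<R_{n-1}$ to place $R_n$ in $\mathrm{dom}(f_n)$, invoking Proposition~\ref{pro5} for the cycle length $n+3$, reading off $y_n(R_n)=0$ and hence $f_n(R_n)=1$, and concluding $R_n=r_n$ by uniqueness. The only difference is cosmetic: the paper packages the backward chain $y_0\leftarrow -1\leftarrow\infty\leftarrow 0$ into Remark~\ref{wy}, whereas you spell it out via bijectivity of the M\"obius map $T$ and the distinctness clause of Theorem~\ref{thm0}; your version is in fact slightly more careful about why no earlier zero can occur.
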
 

\begin{proof}

Apply induction on $n$.
Let $n = 1$.
It is easy to check that $r_1 = \frac{1}{2} = R_1.$

Let $n = 2$.
Recall that $\{R_n\}_{n >0}$ is decreasing. Hence, $R_2 < R_1 = \beta_{2}$. By Proposition \ref{pro1} it follows that 
$R_2 \in \mathrm{dom}(f_2)$.
By Proposition \ref{pro5}, the sequence $\{y_{k}(R_2)\}_{k \geq 0}$ has cycle length $2+3$. 
From Remark \ref{wy}, it follows that $y_2(R_2) = 0$.
Then $$y_2(R_2) = 0 = 1 - 1 = f_2(R_2) - 1.$$ This implies that $f_2(R_2) = 1$.
Thus, $ r_2 = R_2$.

Let $n = 3$.
Note $R_3 < R_2 = \beta_{3}$. Then by Proposition \ref{pro1}, it follows that 
$R_3 \in \mathrm{dom}(f_3)$.
By Proposition \ref{pro5}, the sequence $\{y_{k}(R_3)\}_{k \geq 0}$ has cycle length $3+3$. 
From Remark \ref{wy}, it follows that $y_3(R_3) = 0$.
Then $$y_3(R_3) = 0 = 1 - 1 = f_3(R_3) - 1.$$ This implies that $f_3(R_3) = 1$.
Thus $ r_3 = R_3$.

Fix $k > 3$.
Assume that $ r_n = R_n$ for $n = 1,2,\ldots,k $.
Note $R_{k+1}< R_{k} = \beta_{k+1}$.
Then by Proposition \ref{pro1}, it follows that 
$R_{k+1} \in \mathrm{dom}(f_{k+1})$.
The sequence $\{ y_l (R_{k+1}) \}_{l \geq 0}$ has cycle length $(k+1)+3$. 
From Remark \ref{wy}, it follows that $y_{k+1}(R_{k+1}) = 0$.
Then $$y_{k+1}(R_{k+1}) = 0 = 1 - 1 = f_{k+1}(R_{k+1}) - 1.$$ This implies that $f_{k+1}(R_{k+1}) = 1$.
Thus, $ r_{k+1} = R_{k+1}$.
\end{proof}

\section{Main results}
In this section we give an upper bound for the minimal dispersion.
%
%
\subsection{Upper bound}
Let $r \in (\frac{1}{4},\frac{1}{2}]$, and let $n_r$ be as in (\ref{eq:alpha}). 
We define the following configurations on the diagonal
\begin{equation} \label{eq:G}
\mathfrak{q}(r)= \{f_k(r) \textbf{1}: 0\leq k \leq n_r  \}.
\end{equation}
It is clear that $|\mathfrak{q}(r)| = n_r +1$.
\\
We define two fundamental box types (Type 1 and Type 2). The definitions are given below, but first some examples in the plane should be elucidating. One can always think about the $2-$dimensional projection of a higher dimensional box onto one of its faces.\\
\newpage
\begin{center}
\center{(Example of Type 1 boxes, $d = 2$)}\\
\end{center}

{\includegraphics[scale = .4,trim={5.925cm 1cm 5.923cm 1cm},clip]{b2.png}}
{\includegraphics[scale = .4,trim={5.922cm 1cm 5.921cm 1cm},clip]{b1.png}}\\
\begin{center}
{(Example of Type 2 boxes, $d=2$)}\\
{\includegraphics[scale = .4,trim={5.95cm 1cm 5.921cm 1cm},clip]{b4.png}}

\end{center}

%
%
\begin{defn}
Let $B = I_1 \times I_2 \times \cdots \times I_d \in \mathfrak{B}$.
The box $B$ is \textit{Type $1$}, if one of the following conditions holds.
There exists $1 \leq j \leq d$, such that $I_j \subset [0,r]$, or such that $I_j \subset [f_{n_r}(r),1]$.
There exists $1 \leq j \leq d$, and  $0\leq k \leq n_r -1$, such that $I_j \subset [f_k(r),f_{k+1}(r)]$.
\medskip
\\
\noindent
The box $B$ is \textit{Type $2$}, if 
there exist  $1 \leq j,l \leq d$, and $0\leq k \leq n_r - 1$, such that $I_j \subset [f_k(r),1]$, and $I_l \subset [0,f_{k+1}(r)]$.
\end{defn}

\begin{lemma}\label{lem1}
Let $r \in (\frac{1}{4},\frac{1}{2}]$.
Let $B \in \mathfrak{B}$ be a box of Type $1$ or Type $2$.
Then \normalfont{Vol}$(B) \leq r$.
\end{lemma}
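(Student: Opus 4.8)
The plan is to bound the volume of each box type by separating out the single coordinate (or pair of coordinates) whose interval is constrained, and bounding the product of the remaining coordinate-lengths trivially by $1$. For a Type~$1$ box there is a coordinate $j$ whose interval $I_j$ sits inside one of the ``slabs'' determined by the diagonal configuration $\mathfrak{q}(r)$. Since every $I_i\subset[0,1]$, we have $\mathrm{Vol}(B)=\prod_{i=1}^d|I_i|\le |I_j|\cdot\prod_{i\ne j}1=|I_j|$, so it suffices to check that each admissible slab has length at most $r$. There are three kinds of slab to verify: $[0,r]$ (length exactly $r$), $[f_{n_r}(r),1]$ (length $1-f_{n_r}(r)$), and the middle slabs $[f_k(r),f_{k+1}(r)]$ for $0\le k\le n_r-1$ (length $f_{k+1}(r)-f_k(r)$).

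For the middle slabs I would use the recursion \eqref{f} directly. From $f_{k+1}(r)=\frac{r}{1-f_k(r)}$ one gets $f_{k+1}(r)\bigl(1-f_k(r)\bigr)=r$, so
\begin{equation*}
f_{k+1}(r)-f_k(r)\;=\;f_{k+1}(r)-\Bigl(1-\tfrac{r}{f_{k+1}(r)}\Bigr)\cdot\text{(rearrangement)},
\end{equation*}
but cleaner is to compute $f_{k+1}(r)-f_k(r)$ and show it equals $r\cdot\frac{1-f_{k-1}(r)}{\,1-f_k(r)\,}$ type expression, or more simply bound $f_{k+1}(r)-f_k(r)\le r$ by observing $f_{k+1}(r)(1-f_k(r))=r$ forces $f_{k+1}(r)-f_{k+1}(r)f_k(r)=r$, hence $f_{k+1}(r)-f_k(r)=r+f_k(r)\bigl(f_{k+1}(r)-1\bigr)\le r$, where the inequality uses $f_k(r)\ge 0$ together with $f_{k+1}(r)\le 1$ (the latter holds because $k+1\le n_r$ and the definition \eqref{afunc}--\eqref{eq:alpha} of $n_r$ guarantees $f_i(r)<1$ for $i\le n_r$, by Corollary \ref{cor0} applied with the interval membership from Proposition \ref{pro6}). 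For the top slab $[f_{n_r}(r),1]$ the length is $1-f_{n_r}(r)$, and since $f_{n_r}(r)\ge 1-r$ by the defining property of $n_r$, this is at most $r$. The bottom slab $[0,r]$ is immediate.

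For a Type~$2$ box there are two constrained coordinates: $I_j\subset[f_k(r),1]$ and $I_l\subset[0,f_{k+1}(r)]$ for some $0\le k\le n_r-1$. Here the trivial bound on the other $d-2$ coordinates gives $\mathrm{Vol}(B)\le |I_j|\cdot|I_l|\le \bigl(1-f_k(r)\bigr)\cdot f_{k+1}(r)$. But this product is exactly $r$ by the recursion: $f_{k+1}(r)=\frac{r}{1-f_k(r)}$ rearranges to $f_{k+1}(r)\bigl(1-f_k(r)\bigr)=r$. So the Type~$2$ bound is an equality at the extreme, and the lemma follows. I would present the Type~$2$ case first, since it is a one-line identity, and then handle Type~$1$.

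The main obstacle is the middle-slab estimate for Type~$1$: unlike Type~$2$, the bound $f_{k+1}(r)-f_k(r)\le r$ is a genuine inequality rather than an identity, and it relies on knowing $f_{k+1}(r)\le 1$ throughout the relevant range. I would make sure to invoke the correct range guarantee — that for the configuration built from $n_r$, every index $k+1$ appearing satisfies $k+1\le n_r$ and hence $f_{k+1}(r)\le 1$ (strictly, $<1$ for $k+1<n_r$ and $\le$ at the top), via Corollary \ref{cor0} and Proposition \ref{pro6}. Care is also needed at the endpoint $r=\tfrac12$, where $n_r=1$ and the middle-slab family may be empty, so the argument should be stated so that an empty range of $k$ is vacuously fine.
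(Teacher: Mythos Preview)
Your proof is correct and follows essentially the same structure as the paper's: bound $\mathrm{Vol}(B)$ by the constrained interval length(s) and verify each case via the recursion \eqref{f}, with the Type~2 case reducing to the identity $(1-f_k(r))f_{k+1}(r)=r$.

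The only substantive difference is in the middle-slab estimate for Type~1. The paper writes
\[
f_{k+1}(r)-f_k(r)=\frac{r-(1-f_k(r))f_k(r)}{1-f_k(r)}
\]
and uses the immediately available fact $f_k(r)<1-r$ for $k<n_r$ (this is literally the minimality in the definition of $n_r$) to bound the right side by $\frac{r-rf_k(r)}{1-f_k(r)}=r$. You instead rearrange to $f_{k+1}(r)-f_k(r)=r+f_k(r)\bigl(f_{k+1}(r)-1\bigr)$ and invoke $f_{k+1}(r)\le 1$. This is fine, but your citation is slightly off: Corollary~\ref{cor0} as stated gives the monotonicity $f_{i-1}(r)<f_i(r)$, not the bound $f_i(r)<1$. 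The bound you need is quickest to get directly: since $k+1\le n_r$, one has $f_k(r)<1-r$, hence $1-f_k(r)>r$, hence $f_{k+1}(r)=\frac{r}{1-f_k(r)}<1$. With that correction the argument is complete, and the paper's version is marginally cleaner only because it avoids this extra step.
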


\begin{proof}
Let $B = I_1 \times I_2 \times \cdots \times I_d \in \mathfrak{B}$.
First assume that $B$ is Type $1$. 
Assume that $I_i \subset [0,r]$. Then the lemma trivially holds.
Assume that there exists $1 \leq i \leq d$, such that $I_i \subset [f_{n_r}(r),1]$.
Since $n_r$ is the smallest integer such that $ f_{n_r}(r) \geq 1 - r $, it follows that~$$\textrm{Vol}(B) \leq |I_i| \leq 1 - f_{n_r}(r) \leq r.$$
Assume that there exists  $1 \leq i \leq d$, and $0\leq k < n_r $ such that $I_i \subset [f_k(r),f_{k+1}(r)]$.
Then $$\textrm{ Vol}(B) \leq |I_i|  \leq f_{k+1}(r)- f_k(r)  = \frac{r}{1 - f_{k}(r)} - f_{k}(r) = \frac{r - (1 - f_{k}(r))f_k(r)}{1 - f_k(r)}.$$
Recall that $n_r$ is the smallest integer such that $1 - r  \leq f_{n_r}(r) $.  Since  $k < n_r$, it follows that $f_k(r) < 1 - r$.
Then $$\textrm{Vol}(B) \leq \frac{r - (1 - f_{k}(r))f_k(r)}{1 - f_k(r)} \leq \frac{r - rf_k(r)}{1 - f_k(r)} = r.$$ 
Hence, if $B\in \mathfrak{B}$ is Type $1$, then Vol$(B) \leq r$.

Let $B \in \mathfrak{B}$ be a Type $2$ box. By definition there exist $1 \leq i,j \leq d$ and $0\leq k \leq n_r - 1$ such that $I_i \subset [f_k(r),1]$ and  $I_j \subset [0,f_{k+1}(r)]$.
Recall the definition of $f_{k+1}$ as in (\ref{f}) given by $$f_{k+1}(r) = \frac{r}{1 - f_k(r)}.$$ It follows that $$\textrm{Vol}(B) \leq |I_i||I_j| \leq (1 - f_k(r))\frac{r}{1 - f_k(r)} = r.$$
This proves the lemma.
\end{proof}

\begin{lemma}\label{lem2}
Let $r \in (\frac{1}{4},\frac{1}{2}]$. 
Let $B  \in \mathfrak{B}$ be such that $\mathfrak{q}(r) \cap B = \emptyset$. 
Then $B$ is either Type $1$ or Type $2$.
\end{lemma}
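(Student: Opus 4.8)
The plan is to reduce the $d$-dimensional hypothesis ``$B$ meets none of the diagonal points'' to a single one-dimensional condition, and then to read off the Type~$1$/Type~$2$ structure from it. Write $B = I_1 \times \cdots \times I_d$ with $I_j = [a_j,b_j)$, and fix a coordinate $j^\star$ at which $\max_{1\le j\le d} a_j$ is attained and a coordinate $l^\star$ at which $\min_{1\le j\le d} b_j$ is attained. The first observation is that a diagonal point $f_k(r)\textbf{1}$ lies in $B$ precisely when $f_k(r) \in [a_{j^\star},b_{l^\star})$, since this is exactly the requirement $a_j \le f_k(r) < b_j$ for every coordinate $j$. Hence the hypothesis $\mathfrak{q}(r)\cap B=\emptyset$ is equivalent to the assertion that no value $f_k(r)$, $0\le k\le n_r$, falls in the overlap interval $[a_{j^\star},b_{l^\star})$.

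Next I would exploit the strict monotonicity $f_0(r) < f_1(r) < \cdots < f_{n_r}(r)$, which holds by Corollary \ref{cor0} since $r < r_{n_r}$. Setting $L = \{k : f_k(r) < a_{j^\star}\}$ and $R = \{k : f_k(r) \ge b_{l^\star}\}$, monotonicity makes $L$ an initial block and $R$ a final block of indices, and the reduction above forces $L \cup R = \{0,1,\dots,n_r\}$. I would then split into three cases. If the smallest point already satisfies $f_0(r) \ge b_{l^\star}$, then $b_{l^\star} \le f_0(r) = r$, so $I_{l^\star} \subseteq [0,r]$ and $B$ is Type~$1$. Symmetrically, if the largest point satisfies $f_{n_r}(r) < a_{j^\star}$, then $a_{j^\star} > f_{n_r}(r)$ gives $I_{j^\star} \subseteq [f_{n_r}(r),1]$, and again $B$ is Type~$1$.

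In the remaining case $f_0(r) < b_{l^\star}$ and $f_{n_r}(r) \ge a_{j^\star}$, the cover $L\cup R = \{0,\dots,n_r\}$ forces $0 \in L$ and $n_r \in R$, so $p := \max L$ is well defined with $0 \le p \le n_r - 1$. Since $p+1 \notin L$ we have $f_{p+1}(r) \ge a_{j^\star}$, and invoking the cover once more gives $p+1 \in R$, that is $f_{p+1}(r) \ge b_{l^\star}$. Then $a_{j^\star} > f_p(r)$ yields $I_{j^\star} \subseteq [f_p(r),1]$, while $b_{l^\star} \le f_{p+1}(r)$ yields $I_{l^\star} \subseteq [0,f_{p+1}(r)]$, so $B$ is Type~$2$ with the index $k = p$.

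The routine content is the bookkeeping with the half-open intervals, checking that each containment holds with the correct strict or non-strict inequality at the endpoints. The one genuine obstacle is the last case: I must guarantee that the index $p$ serving as the Type~$2$ witness actually lies in the admissible range $\{0,\dots,n_r-1\}$, which is precisely why the two extreme configurations ($f_0(r)\ge b_{l^\star}$ and $f_{n_r}(r)<a_{j^\star}$) must be separated off first and disposed of as Type~$1$. The argument as a whole hinges on the opening reduction to the overlap interval $[a_{j^\star},b_{l^\star})$, and that reduction relies on the strict monotonicity of $k\mapsto f_k(r)$ from Corollary \ref{cor0}, without which $L$ and $R$ would not be a clean initial and final segment of indices.
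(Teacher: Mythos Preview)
Your argument is correct, and it takes a genuinely different route from the paper's proof. The paper begins by fixing a single coordinate $i$ with $I_i\ne[0,1]$, looks at the set $Q=I_i\cap P(r)$ of diagonal values falling in that one interval, and then runs a multi-step case algorithm: at each stage it uses $B\cap\mathfrak{q}(r)=\emptyset$ to produce another coordinate interval avoiding the smallest surviving $p_m$, shrinks the active set $Q_\ell$, and terminates after at most $n_r$ rounds in either a Type~$1$ or a Type~$2$ configuration. Your reduction to the single overlap interval $[a_{j^\star},b_{l^\star})$ short-circuits all of this: by passing immediately to the extremal coordinates $j^\star$ and $l^\star$, the full hypothesis becomes a one-dimensional statement, and the three-way case split drops out in one pass. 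What the paper's approach buys is that it never names the monotonicity $f_0(r)<\cdots<f_{n_r}(r)$ explicitly (though the case structure leans on it throughout); what your approach buys is a much shorter argument with no iteration and a transparent dependence on the order of the $f_k(r)$. As a minor remark, your proof actually uses less than you claim: the cover $L\cup R=\{0,\dots,n_r\}$ together with $p=\max L$ already forces $p+1\in R$ regardless of whether $L$ and $R$ are contiguous blocks, so the appeal to Corollary~\ref{cor0} is not strictly necessary for the logic, only for the intuition.
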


\begin{proof}
Let
\begin{equation} \label{eq:P}
P(r) = \{p_i : p_i = f_i(r),\  0\leq i \leq n_r\},
\end{equation}

$$\mathfrak{q}(r) = P(r)\mathbf{1}.$$
Let $B = I_1 \times I_2 \times \cdots \times I_d \in \mathfrak{B}$.
Notice that $\mathfrak{q}(r) \not = \emptyset$ for all $r \in (\frac{1}{4},\frac{1}{2}]$.
Then it is clear that there exists $1 \leq i \leq d$ such that $I_i \neq [0,1]$.
Define $$Q := I_i \cap P(r).$$
If $Q = \emptyset$, then $B$ is Type $1$. From here we list the remaining possible cases.

\medskip
\noindent
Case 1: In this case $Q = \{p_{n_r}\}$.
Then $p_{n_r} \in I_i$.
It is clear that $I_i \subset (p_{{n_r}-1},1]$.
Since $B\cap \mathfrak{q}(r)= \emptyset$, there exists $1 \leq j \leq d$ such that $I_j \subset (p_{n_r},1]$ or $I_j \subset [0,p_{n_r})$.
If $I_j \subset (p_{n_r},1]$, then by definition $B$ is Type $1$.
If $I_j \subset [0,p_{n_r})$, then since $I_i \subset (p_{n_r-1},1]$, by definition $B$ is Type $2$. 

\medskip
\noindent
Case 2: Denote $Q_0 = Q$.
In the second case let $$0 <  m \leq n_r,$$ and define  $$Q_0 \subset \{p_m,p_{m+1}, \ldots ,p_{ n_r} \}.$$
The following algorithm shows that $B$ is  Type $1$ or Type $2$.
Let $I_{m_0} = I_i$. Then it is clear that  $I_{m_0} \subset (p_{m - 1},1]$.
Recall that $B \cap \mathfrak{q}(r) = \emptyset$. 
Then there exists $I'_{m_0}$,
such that  $I'_{m_0} \subset [0,p_{m})$ or $I'_{m_0} \subset (p_{m},1]$.
If $I'_{m_0} \cap Q_0 = \emptyset$, then $B$ is Type $1$.
If $I'_{m_0} \subset [0,p_{m})$, then since $I_{m_0} \subset (p_{m-1},1]$, $B$ is Type $2$.
If $I'_{m_0} \subset (p_{m},1]$, then denote $I_{m_1} := I'_{m_0}$.
Let $$1 < m \leq n_r ,$$ and define $$Q_1 = Q_0\cap I_{m_1} \subset \{ p_m,p_{m+1}, \ldots ,p_{ n_r} \}.$$
If $I_{m_1} \cap P(r) = \{p_{n_r}\}$, then appeal to Case 1. 
Since $B \cap \mathfrak{q}(r) = \emptyset$ there exists $I'_{m_1}$, 
such that
$I'_{m_1} \subset [0,p_{m})$ or $I'_{m_1} \subset (p_{m},1]$.
If $I_{m_1}' \cap Q_1 = \emptyset$, then $B$ is Type $1$.
If $I_{m_1}' \subset [0,p_{m})$, then since $I_{m_1} \subset (p_{m -1},1]$ $B$ is Type $2$.
If $I_{m_1}' \cap P(r) = \{p_{n_r}\}$, then appeal to Case $1$.
If $I'_{m_1} \subset (p_{m},1]$
denote $I_{m_2}: = I'_{m_1} \subset (p_{m},1]$, and continue the algorithm.
At step $\ell$ of the algorithm let $$\ell  < m \leq n_r ,$$ and define
$$Q_\ell = Q_{m_{\ell -1} }\cap I_{m_{\ell}} \subset \{ p_{m }, p_{m + 1}, \ldots, p_{n_r}\}.$$
If $Q_\ell = \emptyset$, then $B$ is Type $1$.
Assume $Q_\ell \not = \emptyset$. If $ I_{m_\ell} \cap P(r) = \{p_{n_r} \}$, then appeal to Case $1$.
If there exists an interval $I'_{m_{\ell}}$,
such that $I'_{m_{\ell}} \subset [0,p_{m})$, then since $ I_{m _{\ell}} \subset(p_{m-1},1]$ by definition $B$ is Type $2$.
The algorithm terminates after at most $n_r$ steps.

\medskip
\noindent
Case 3: In the last case $p_0 \in Q$.
Since $B \cap \mathfrak{q}(r)= \emptyset$ there exists $I_j \subset [0,p_0)$ or $I_j \subset (p_0,1]$.
If $I_j \subset [0,p_0)$, then $B$ is Type $1$.
Finally, if $I_j \subset (p_0,1]$, then apply the results in Case $1$ and Case $2$. 
This proves the lemma.
\end{proof}

\begin{cor}\label{cor1}
Let $r \in (\frac{1}{4},\frac{1}{2}]$.  Then $$N(r,d) \leq \alpha(r).$$
\end{cor}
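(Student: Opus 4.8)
The plan is to verify that the explicit configuration $\mathfrak{q}(r)$ realizes a dispersion of at most $r$, and then to unwind the definition of $N(r,d)$. Recall that $N(r,d) = \min\{n \in \mathbb{N} : \mathrm{disp}^{*}(n,d) \leq r\}$, so in order to conclude $N(r,d) \leq \alpha(r)$ it is enough to exhibit that $\alpha(r)$ belongs to this set; that is, it suffices to prove the single inequality $\mathrm{disp}^{*}(\alpha(r),d) \leq r$. No monotonicity of the map $n \mapsto \mathrm{disp}^{*}(n,d)$ is needed, since the minimum of a set of integers is automatically at most any one of its members.

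For the witness I would take $T = \mathfrak{q}(r)$. As noted immediately after (\ref{eq:G}), and using that $f_0(r),f_1(r),\ldots,f_{n_r}(r)$ are strictly increasing (Proposition \ref{pro0} / Corollary \ref{cor0}) so that the points $f_k(r)\mathbf{1}$ are pairwise distinct, one has $|\mathfrak{q}(r)| = n_r + 1$, which equals $\alpha(r)$ by (\ref{eq:alpha}). Consequently $\mathfrak{q}(r)$ is an admissible competitor in the infimum defining $\mathrm{disp}^{*}(\alpha(r),d)$, giving
$$\mathrm{disp}^{*}(\alpha(r),d) \leq \mathrm{disp}(\mathfrak{q}(r)).$$

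It then remains to bound $\mathrm{disp}(\mathfrak{q}(r)) = \sup_{B \in \mathfrak{B},\, B \cap \mathfrak{q}(r) = \emptyset} \mathrm{Vol}(B)$. I would fix an arbitrary box $B \in \mathfrak{B}$ with $B \cap \mathfrak{q}(r) = \emptyset$. Lemma \ref{lem2} forces $B$ to be of Type $1$ or Type $2$, and Lemma \ref{lem1} then yields $\mathrm{Vol}(B) \leq r$. Since this uniform bound holds for every box disjoint from $\mathfrak{q}(r)$, the supremum over all such boxes satisfies $\mathrm{disp}(\mathfrak{q}(r)) \leq r$. Chaining the two inequalities gives $\mathrm{disp}^{*}(\alpha(r),d) \leq r$, whence $\alpha(r)$ lies in the set defining $N(r,d)$ and therefore $N(r,d) \leq \alpha(r)$.

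The substance of the argument is entirely carried by Lemmas \ref{lem1} and \ref{lem2}, which are already established, so the only genuine obstacle here is bookkeeping rather than a new idea. The points I would be most careful about are the exact count $|\mathfrak{q}(r)| = \alpha(r)$ and the correct reading of the nested definitions of $N(r,d)$, $\mathrm{disp}^{*}$, and $\mathrm{disp}$, so that the passage from a pointwise volume bound on individual empty boxes, to a bound on the supremum, and thence to the infimum over configurations and finally to the minimal admissible $n$, is logically valid.
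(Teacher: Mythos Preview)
Your proof is correct and follows essentially the same route as the paper: exhibit the configuration $\mathfrak{q}(r)$, invoke Lemma~\ref{lem2} to classify every empty box as Type~1 or Type~2, apply Lemma~\ref{lem1} to bound its volume by $r$, and read off $N(r,d)\le |\mathfrak{q}(r)| = n_r+1 = \alpha(r)$. If anything, you are slightly more careful than the paper in justifying $|\mathfrak{q}(r)|=\alpha(r)$ via the strict monotonicity of the $f_k(r)$ and in unwinding the nested definitions; the paper also asserts the equality $\mathrm{disp}(\mathfrak{q}(r))=r$, but only the inequality $\le r$ that you establish is needed here.
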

\begin{proof}
Let $n \geq 1$. Let $r \in (\frac{1}{4},\frac{1}{2}]$ be such that $r_n \leq r < r_{n-1} $. 
Recall the configuration $\mathfrak{q}(r)$ defined as in (\ref{eq:G}). 
By Lemma \ref{lem1}, and Lemma \ref{lem2} we get that
$$\mathrm{disp}(\mathfrak{q}(r)) = r.$$
Since $|\mathfrak{q}(r)| = n_r + 1 = n$, it follows that $N(r,d) \leq n$.
\end{proof}
Corollary \ref{cor1} gives an upper bound for the minimal dispersion. 
\subsection{Formula derivation}
We derive a simple formula for the upper bound given in Corollary \ref{cor1}.
\begin{cor}\label{cor2}
Let $r \in (\frac{1}{4},\frac{1}{2}]$. 
Then
$$\alpha(r) = \left \lfloor {\frac{\pi}{\arccos{( \frac{1}{2\sqrt{r}})}}}  \right \rfloor - 3.$$
\end{cor}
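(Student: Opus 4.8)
The plan is to turn the implicit description of $\alpha$ into an explicit one by combining the two facts already established. Proposition~\ref{pro6} identifies $\alpha(r)$ with the unique integer $i\ge 1$ for which $r_i\le r<r_{i-1}$, and Proposition~\ref{Rn} supplies the closed form $r_i=R_i=\frac{1}{4\cos^2(\frac{\pi}{i+3})}$. Substituting the second into the first reduces the whole corollary to a single elementary problem: determine which index $i$ satisfies
\[
\frac{1}{4\cos^2\!\left(\frac{\pi}{i+3}\right)}\le r<\frac{1}{4\cos^2\!\left(\frac{\pi}{i+2}\right)},
\]
and then express that $i$ in terms of $r$.

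To carry this out I would first abbreviate $\theta:=\arccos\!\big(\frac{1}{2\sqrt r}\big)$, recording that for $r\in(\frac14,\frac12]$ one has $\frac{1}{2\sqrt r}\in[\frac{1}{\sqrt2},1)$, so that $\theta\in(0,\frac{\pi}{4}]$ and $\cos\theta=\frac{1}{2\sqrt r}$. Each of the two inequalities above then transports to an inequality about angles. Clearing denominators in $R_i\le r$ gives $\cos^2(\frac{\pi}{i+3})\ge\frac{1}{4r}$; taking positive square roots (valid because $\frac{\pi}{i+3}\in(0,\frac{\pi}{4}]$ keeps the cosine positive) gives $\cos(\frac{\pi}{i+3})\ge\cos\theta$, and applying the decreasing function $\arccos$ yields $\frac{\pi}{i+3}\le\theta$, i.e.\ $i+3\ge\frac{\pi}{\theta}$. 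The same manipulation applied to the strict inequality $r<R_{i-1}$ produces $\cos(\frac{\pi}{i+2})<\cos\theta$, hence $i+2<\frac{\pi}{\theta}$. Together these pin down $i+3$ as the unique integer lying in the half-open interval $\big[\frac{\pi}{\theta},\frac{\pi}{\theta}+1\big)$, so that $\alpha(r)=i$ is obtained by isolating this integer and subtracting $3$.

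The step I expect to be the main obstacle is the rounding bookkeeping in this last move, together with the behaviour at the partition endpoints. The unique integer in $\big[\frac{\pi}{\theta},\frac{\pi}{\theta}+1\big)$ is $\lceil\frac{\pi}{\theta}\rceil$, which agrees with $\lfloor\frac{\pi}{\theta}\rfloor$ exactly when $\frac{\pi}{\theta}$ is an integer, and $\frac{\pi}{\theta}$ is an integer precisely at the endpoints $r=R_i$, where $\frac{\pi}{\theta}=i+3$. The delicate point is therefore to track the strict versus non-strict inequalities so that the value assigned at each left endpoint $r=R_i$ is consistent with the half-open convention $[R_i,R_{i-1})$ of Proposition~\ref{pro6}; this is exactly where an off-by-one or a floor/ceiling slip can occur, and it is the step I would verify most carefully before committing to the closed form. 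Once the correct rounding is fixed, the formula for $\alpha(r)$ follows immediately, and combining it with Corollary~\ref{cor1} (together with the elementary estimate relating $\arccos(\frac{1}{2\sqrt r})$ to $\sqrt{r-\frac14}$) gives the explicit bound on $N(r,d)$ stated in Theorem~\ref{thm1}.
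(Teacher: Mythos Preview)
Your plan is exactly the paper's: combine Proposition~\ref{pro6} (which gives $\alpha(r)=i$ on $[r_i,r_{i-1})$) with Proposition~\ref{Rn} (which gives $r_i=\tfrac{1}{4}\sec^2(\tfrac{\pi}{i+3})$), and then invert. The paper does the same, only more tersely.

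However, the ``rounding bookkeeping'' you flag as the main obstacle is a real one, and in fact the formula as stated is \emph{false} away from the endpoints $r_k$. Your own derivation gives $i+2<\pi/\theta\le i+3$ for $r\in[r_i,r_{i-1})$, so $\lfloor\pi/\theta\rfloor=i+2$ whenever $r\in(r_i,r_{i-1})$, and the floor formula then returns $i-1$ rather than $\alpha(r)=i$. A concrete check: for $r=0.4\in(r_2,r_1)$ one has $f_0(0.4)=0.4<0.6$ and $f_1(0.4)=\tfrac23>0.6$, so $\alpha(0.4)=2$; but $\pi/\arccos(\tfrac{1}{2\sqrt{0.4}})\approx 4.77$, and $\lfloor 4.77\rfloor-3=1$. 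The correct closed form produced by your argument is
\[
\alpha(r)=\Bigl\lceil\frac{\pi}{\arccos\!\big(\tfrac{1}{2\sqrt r}\big)}\Bigr\rceil-3,
\]
which agrees with the floor version only at $r=r_k$. The paper's proof glosses over precisely this point: it records that $g(r):=\pi/\arccos(\tfrac{1}{2\sqrt r})$ is strictly decreasing with $g(r_k)=k+3$, and then asserts $\lfloor g(r)\rfloor-3=k$ on all of $[r_k,r_{k-1})$, which does not follow.

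So you should not try to close the gap to match the stated identity; it cannot be closed. Your analysis is correct, and what it proves is the ceiling version above. Note that this off-by-one does not endanger Theorem~\ref{thm1}: the passage from $\arccos(\tfrac{1}{2\sqrt r})$ to $\sqrt{r-\tfrac14}$ roughly doubles $\pi/\theta$, so the slack in $\lfloor\pi/\sqrt{r-\tfrac14}\rfloor-3$ comfortably absorbs the extra unit.
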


\begin{proof}
Let $k>0$. 
By  Proposition \ref{pro5} and the conclusion of Remark \ref{Rn} it is clear that
\begin{equation}\label{acos}
\frac{\pi}{\arccos{ (\frac{1}{2\sqrt{r_k}})}}- 3 = \alpha(r_k) = k.
\end{equation}
Notice that the function is strictly decreasing on $(\frac{1}{4},\frac{1}{2}]$.
Let $r \in (\frac{1}{4},\frac{1}{2}]$, and let $k>0$ be such that $ r_{k}  \leq r  < r_{k-1}$. 
By Proposition \ref{pro6} and (\ref{acos}) it follows that 
$$  \alpha(r) = k = \left \lfloor {\frac{\pi}{\arccos{( \frac{1}{2\sqrt{r}})}}}  \right \rfloor - 3 .$$
\end{proof}

\begin{theor}
Let $r \in (\frac{1}{4},\frac{1}{2})$. Then $$ N(r,d) \leq \left \lfloor  \frac{\pi}{\sqrt{r - \frac{1}{4}}} \right \rfloor  - 3.$$
\end{theor}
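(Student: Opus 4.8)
The plan is to combine the two corollaries already in hand and then reduce everything to a single elementary trigonometric inequality. By Corollary~\ref{cor1} we have $N(r,d)\le\alpha(r)$, and by Corollary~\ref{cor2} we have $\alpha(r)=\lfloor \pi/\arccos(\tfrac{1}{2\sqrt r})\rfloor-3$. Hence it suffices to prove the purely real-analytic statement
$$\left\lfloor \frac{\pi}{\arccos(\tfrac{1}{2\sqrt r})}\right\rfloor \le \left\lfloor \frac{\pi}{\sqrt{r-\tfrac14}}\right\rfloor \qquad (r\in(\tfrac14,\tfrac12)),$$
since subtracting the same constant $3$ from both floors preserves the inequality. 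Because $x\mapsto\lfloor x\rfloor$ is nondecreasing, a sufficient condition is $\pi/\arccos(\tfrac{1}{2\sqrt r})\le \pi/\sqrt{r-\tfrac14}$, that is,
$$\arccos\!\left(\frac{1}{2\sqrt r}\right)\ \ge\ \sqrt{r-\tfrac14}.$$

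To prove this last inequality I would substitute $\theta:=\arccos(\tfrac{1}{2\sqrt r})$. For $r\in(\tfrac14,\tfrac12)$ one has $\tfrac{1}{2\sqrt r}\in(\tfrac{1}{\sqrt2},1)$, so $\theta$ ranges over $(0,\tfrac\pi4)$. From $\cos\theta=\tfrac{1}{2\sqrt r}$ one gets $r=\tfrac{1}{4\cos^2\theta}$, whence a short computation gives $r-\tfrac14=\tfrac{\sin^2\theta}{4\cos^2\theta}=\tfrac14\tan^2\theta$ and therefore $\sqrt{r-\tfrac14}=\tfrac12\tan\theta$. The desired inequality thus becomes the clean statement
$$\tan\theta\ \le\ 2\theta\qquad(\theta\in(0,\tfrac\pi4]).$$

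This is where the real content lies, and it is dispatched by a one-line monotonicity argument: set $g(\theta)=2\theta-\tan\theta$, so that $g(0)=0$ and $g'(\theta)=2-\sec^2\theta=1-\tan^2\theta$. On $(0,\tfrac\pi4)$ we have $\tan\theta<1$, hence $g'(\theta)>0$, so $g$ is strictly increasing and $g(\theta)>g(0)=0$; that is, $\tan\theta<2\theta$ throughout the range. Unwinding the substitution yields $\arccos(\tfrac{1}{2\sqrt r})>\sqrt{r-\tfrac14}$ for every $r\in(\tfrac14,\tfrac12)$, so the two reciprocals, and then their floors, satisfy the required inequality; chaining this with Corollaries~\ref{cor1} and~\ref{cor2} gives $N(r,d)\le\lfloor \pi/\sqrt{r-\tfrac14}\rfloor-3$.

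I expect no genuine obstacle beyond bookkeeping: the only subtlety is handling the endpoints correctly—the interval for $\theta$ is $(0,\tfrac\pi4)$ precisely because $r<\tfrac12$ is strict, which keeps the sign of $g'$ from vanishing inside the range—and observing that the \emph{strict} inequality between the two arguments is more than enough to control the monotone, non-strict comparison of their floors.
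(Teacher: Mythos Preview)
Your proof is correct and follows exactly the same route as the paper: reduce to the inequality $\arccos(\tfrac{1}{2\sqrt r})\ge\sqrt{r-\tfrac14}$ and then invoke Corollaries~\ref{cor1} and~\ref{cor2}. The paper simply asserts that inequality without justification, whereas you supply a clean proof of it via the substitution $\theta=\arccos(\tfrac{1}{2\sqrt r})$ and the elementary bound $\tan\theta\le 2\theta$ on $(0,\tfrac\pi4)$; so your argument is in fact more complete than the original.
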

 
 \begin{proof}
Let $r \in (\frac{1}{4},\frac{1}{2})$. Then
$${\arccos \big(\frac{1}{2 \sqrt{r}} \big)} \geq {\sqrt{r - \frac{1}{4}}}.$$
This combined with Corollary \ref{cor1} and Corollary \ref{cor2} gives
$$N(r,d) \leq \left \lfloor  \frac{\pi}{{\arccos \big(\frac{1}{2 \sqrt{r}} \big)} } \right \rfloor  -3
\leq \left \lfloor  \frac{\pi}{\sqrt{r - \frac{1}{4}}} \right \rfloor - 3.$$
This proves the theorem.
 \end{proof}

\section{Sharp estimates}
We prove some results about our configurations on the diagonal and diagonal analogues in the cube.  

\subsection{The diagonal}
\begin{prop}\label{diag0}
Let $r \in (\frac{1}{4},\frac{1}{2}]$, and let $T \subset [0,1]^d$ be on the diagonal 
such that $$\mathrm{disp}(T) = r.$$ Then $$|T| \geq n_r + 1.$$
\end{prop}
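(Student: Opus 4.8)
The plan is to reduce the statement to a short system of necessary inequalities on the diagonal coordinates, and then run a one-variable induction mirroring the recursion (\ref{f}). Write the points of $T$ as $t_1\mathbf{1},\dots,t_m\mathbf{1}$ with $m=|T|$; since $T$ is a set we may take the $t_i$ distinct and ordered $0\le t_1<t_2<\cdots<t_m\le 1$. The corner point $\mathbf 1$ (the case $t_m=1$) lies in no box of $\mathfrak B$, since every box has $b_i\le 1$ and hence $1\notin[a_i,b_i)$; so it does not affect $\mathrm{disp}(T)$, and we may assume $\mathbf 1\notin T$, i.e. $t_m<1$, the general case following a fortiori. The point $\mathbf 0$ needs no special treatment.

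First I would extract three families of axis-parallel boxes disjoint from $T$, each exploiting the half-open convention of $\mathfrak B$. Taking $I_1=[0,t_1)$ with all other coordinates $[0,1)$ gives an empty box of volume $t_1$, so $\mathrm{disp}(T)=r$ forces $t_1\le r=f_0(r)$. Taking $I_1=[a,1)$ with $a\downarrow t_m$ gives empty boxes of volume tending to $1-t_m$, so $t_m\ge 1-r$. Most importantly, for each $1\le j\le m-1$ the ``cross'' box with $I_1=[a,1)$ ($a\downarrow t_j$) and $I_2=[0,t_{j+1})$ is disjoint from $T$, because the only coordinates that can contain a diagonal point intersect in $(t_j,t_{j+1})$, which contains no $t_i$; its volume approaches $(1-t_j)t_{j+1}$, whence $(1-t_j)t_{j+1}\le r$, that is $t_{j+1}\le \frac{r}{1-t_j}$. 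This is precisely the Type $2$ relation behind Lemma \ref{lem1}, and it is the engine of the argument.

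Next I would prove by induction on $j$ that $t_j\le f_{j-1}(r)$. The base case is $t_1\le r=f_0(r)$. For the step, assuming $t_j\le f_{j-1}(r)$ with $f_{j-1}(r)<1$, the map $u\mapsto \frac{r}{1-u}$ is increasing on $[0,1)$, so by (\ref{f}),
$$
t_{j+1}\le \frac{r}{1-t_j}\le \frac{r}{1-f_{j-1}(r)}=f_j(r).
$$
To license each step I would record that while $j-1<n_r$, the defining property of $n_r$ (the least $k$ with $f_k(r)\ge 1-r$) gives $f_{j-1}(r)<1-r<1$, so the fractions are well defined, increasing, and lie in the relevant domains.

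Finally I would argue by contradiction: suppose $m\le n_r$. The induction gives $t_m\le f_{m-1}(r)$, and since $m-1\le n_r-1<n_r$, minimality of $n_r$ yields $f_{m-1}(r)<1-r$, so $t_m<1-r$, contradicting $t_m\ge 1-r$ from the tail box. Hence $m\ge n_r+1$, as claimed. I expect the only delicate points to be bookkeeping: justifying that the empty-box volumes are suprema approached but not attained (harmless, since dispersion is itself a supremum) and checking that every $f_{j-1}(r)$ invoked lies in $\mathrm{dom}(f_{j-1})$ throughout $j\le m\le n_r$. The one genuinely substantive idea is recognizing the cross boxes as realizing the recursion $f_j(r)=\frac{r}{1-f_{j-1}(r)}$ as a lower-bound mechanism; after that, the argument is the routine induction above.
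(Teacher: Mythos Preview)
Your argument is correct. Both proofs hinge on the same cross-box construction $(t_j,1)\times[0,t_{j+1})\times[0,1)^{d-2}$, but the logical organization is genuinely different. The paper partitions $[0,1]$ by the points $p_i=f_i(r)$, applies pigeonhole to find an empty slot among $n_r+2$ intervals when $|T|\le n_r$, and then runs a case analysis on which slot is empty. You instead extract the three scalar inequalities $t_1\le r$, $t_m\ge 1-r$, and $(1-t_j)t_{j+1}\le r$ directly, and observe that the last one is exactly the recursion (\ref{f}); a one-line induction then forces $t_j\le f_{j-1}(r)$ for all $j\le m$, and the contradiction with $t_m\ge 1-r$ is immediate. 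Your route is shorter and more transparent about \emph{why} the extremal configuration $\mathfrak q(r)$ arises: any diagonal set with dispersion $r$ has its $j$-th point dominated by $f_{j-1}(r)$. The paper's partition argument, on the other hand, generalizes more readily to the extended-diagonal setting of Proposition~\ref{diag1}, where points are not linearly ordered and your induction on consecutive $t_j$ is not available. Note that both proofs implicitly need $d\ge 2$ for the cross box; the statement is in fact false for $d=1$ (e.g.\ $r=1/3$, $T=\{1/3,2/3\}$), consistent with the remark the paper makes after Proposition~\ref{diag1}.
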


\begin{proof}
Let $r \in (\frac{1}{4},\frac{1}{2}]$.
Let $T \subset [0,1]^d$ be on the diagonal such that $$\mathrm{disp}(T) = r.$$
Let $n_r$ be as in (\ref{eq:alpha}), and let $\mathfrak{q}(r)$ be the configuration as defined in (\ref{eq:G}). 
Let $P(r)$ be as defined in (\ref{eq:P}).
Let $$E  = \{e_i \in [0,1]: e_1<e_2< \cdots <e_m, \  m =  |T| \}$$ such that $$T = \{e_i \textbf{1}: e_i \in E \}.$$
Assume toward a contradiction that $|T| \leq n_r $. 
Partition $[0,1]$ into $n_r + 2$ intervals
\begin{equation}\label{partition}
[0,p_0], (p_0,p_1], (p_1,p_2], \ldots, (p_{n_r-1},p_{n_r}],(p_{n_r},1].
\end{equation}
Since $|T| \leq n_r$, there exist at least two intervals which do not intersect $E$. 
Assume $E \cap [0,p_0] = \emptyset$.  Then $p_0 < e_1$.
Construct the box $$B = [0,e_1) \times [0,1]^{d - 1}.$$ 
Then $B \cap T = \emptyset$. However, Vol$(B) = e_1 > p_0 = r$ which contradicts the assumption that disp$(T) =  r$.

Now assume $$E \cap [0,p_0] \not = \emptyset.$$
Then $e_1 \leq  p_0$.
Remove $[0,p_0]$ from the partition in (\ref{partition}).
Then two intervals in the partition
$$ (p_0,p_1], (p_1,p_2], \ldots, (p_{n_r-1},p_{n_r}],(p_{n_r},1]$$
must not intersect $E$.
If there exists $i < n_r$ such that  $e_m < p_{i+1}$, then construct the box
$$B = [0,1] \times [p_i,1] \times [0,1]^{d - 2}.$$
It follows that $B \cap E = \emptyset$, however, $$\mathrm{Vol}(B) = (1 - p_i) > p_{i+1}(1 - p_i) = r.$$
This contradicts the assumption that disp$(T) = r$.
Let $i < n_r$ be the smallest integer such that $$E \cap (p_i,p_{i+1}] = \emptyset.$$ 
Assume $0< k \leq m$ is the smallest integer such that $ p_{i+1} < e_k$. 
Construct the box $$B = [0,e_k) \times [p_i,1] \times [0,1]^{d - 2}.$$
Since $T$ is on the diagonal $B \cap E = \emptyset$, however, $$\mathrm{Vol}(B) = e_k(1 - p_i) > p_{i+1}(1 - p_i) = r.$$
This contradicts the assumption that disp$(T) = r$.

\noindent 
The proposition follows.
\end{proof}
\begin{prop}
Let $i >0$. Let $r_i$ be as defined in (\ref{eq:r}). 
The configuration given by $\mathfrak{q}(r_i)$ in \eqref{eq:G} are symmetric on the diagonal. 
That is, if $0 \leq j \leq i-1$, then
$$1 - f_j(r_i) = f_{(i-1) - j}(r_i).$$ 
\end{prop}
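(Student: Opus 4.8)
The plan is to read the claim as a \emph{reflection symmetry} of the finite sequence $f_0(r_i),f_1(r_i),\dots,f_{i-1}(r_i)$ and to prove it by producing the reflected sequence as a second solution of the defining recursion \eqref{f}. Fix $r=r_i$ and abbreviate $f_k:=f_k(r_i)$. I would introduce the auxiliary sequence
$$g_k := 1 - f_{(i-1)-k}, \qquad 0 \le k \le i-1,$$
so that the assertion $1 - f_j(r_i) = f_{(i-1)-j}(r_i)$ is, after relabelling $j=(i-1)-k$, precisely the statement $g_k = f_k$ for every $0 \le k \le i-1$. Since $\{f_k\}$ is the unique sequence generated by the first-order rule \eqref{f} from the initial value $f_0=r_i$, it suffices to check that $g_k$ has the same initial value and obeys the same rule.

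For the base case, Property \eqref{P2} of Proposition \ref{pro1} gives $f_{i-1}=1-r_i$, whence $g_0 = 1-f_{i-1}=r_i=f_0$. For the recursion, note that $1-g_k=f_{(i-1)-k}$; when $0\le k\le i-2$ the index $(i-1)-k$ lies in $\{1,\dots,i-1\}$, so \eqref{f} applies there and yields $f_{(i-1)-k}=\tfrac{r_i}{1-f_{(i-2)-k}}$. Substituting,
$$\frac{r_i}{1-g_k}=\frac{r_i}{f_{(i-1)-k}}=1-f_{(i-2)-k}=1-f_{(i-1)-(k+1)}=g_{k+1}.$$
Thus $g$ satisfies $g_{k+1}=\tfrac{r_i}{1-g_k}$, exactly the rule defining $f$; combined with $g_0=f_0$, a short induction gives $g_k=f_k$ for all $0\le k\le i-1$, which is the desired identity.

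The only delicate point, and the step I expect to require the most care, is the bookkeeping that keeps every denominator nonzero and every index inside the range $[0,i-1]$ on which the $f_k(r_i)$ are defined. For this I would record two elementary facts for $0\le m\le i-1$: first, $f_m(r_i)<1$, which is exactly the property observed after Corollary \ref{cor0} (that $f_m(r_k)<1$ whenever $m<k$), guaranteeing $1-f_m(r_i)>0$; and second, $f_m(r_i)>0$, which follows by a trivial induction from $f_0=r_i>0$ and $f_m=r_i/(1-f_{m-1})>0$. Together these make $1-g_k=f_{(i-1)-k}>0$ and $1-f_{(i-2)-k}>0$ throughout, so all the fractions above are legitimate. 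An alternative route, more in the spirit of the paper, is to observe that the reduced sequence $\{y_n(r_i)\}$ is time-reversal symmetric: both $-1-y_n(r_i)$ and $y_{(i-1)-n}(r_i)$ satisfy $v_{n+1}=-r_i/(1+v_n)$ with common initial value $-r_i$, giving $y_n(r_i)+y_{(i-1)-n}(r_i)=-1$, which is the same identity after the substitution $y_k=-1+f_k$ of Remark \ref{yn}.
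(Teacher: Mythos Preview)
Your argument is correct and is essentially the same induction as the paper's: both start from $f_{i-1}(r_i)=1-r_i$ (Property~\eqref{P2}) and, at the inductive step, invert the relation $f_{(i-1)-k}(1-f_{(i-2)-k})=r_i$ to push the symmetry from index $k$ to $k+1$. Your packaging via the auxiliary sequence $g_k=1-f_{(i-1)-k}$ and the uniqueness of the recursion is a bit cleaner, and you are more explicit than the paper about why all denominators stay nonzero, but the underlying computation is identical.
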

\begin{proof}
Let $i >0$. Let $r_i$ be as defined in (\ref{eq:r}). 
Employ an inductive argument on $j$.
Let $j = 0$. 
Then by Proposition \ref{pro1} it follows that $$1 - f_0(r_i) = 1 - r_i = f_{i-1}(r_i).$$
Let $j = 1$.
Then by definition of the functions, and by Proposition \ref{pro1} $$ f_{i-1}(r_i)(1 - f_{i-2}(r_i)) =  \frac{r_i}{(1 - f_{i-2}(r_i))}(1 - f_{i-2}(r_i))  = r_i.$$
By Proposition \ref{pro1}, $f_{i-1}(r_i) = 1 - r_i$. It follows that 
$$  (1 - r_i)(1 - f_{i-2}(r_i)) = r_i.$$
Then
$$  1 - f_{i-2}(r_i) = \frac{r_i}{1 - r_i} = f_1(r_i),$$
in particular
$$  1 - f_1(r_i) = f_{i-2}(r_i) =  f_{(i-1)- 1}(r_i).$$
Fix $0 \leq j < i-1$, and assume the induction hypothesis,
for all $0 \leq k \leq j $. Namely that $$1 - f_k(r_i) = f_{(i-1) - k}(r_i).$$
We show that $$1 - f_{j+1}(r_i) = f_{(i-1) - ({j+1})}(r_i).$$
By the induction hypothesis 
$$  1 -  f_j(r_i) = f_{(i  - 1) - j}(r_i).$$
By construction of the functions,  we have that
$$ f_{(i -1) - {j}}(r_i)(1 - f_{(i-1) - j - 1}(r_i)) = \frac{r_i}{(1 - f_{(i-1) - j - 1}(r_i))} (1 - f_{(i-1) - j - 1}(r_i))  = r_i .$$
Therefore,
$$1 - f_{(i-1) - j - 1}(r_i) = \frac{r_i}{(1 -  f_j(r_i))}  = f_{j+1}(r_1).$$
It follows that 
$$1 -  f_{j+1}(r_1) = f_{(i-1) - (j + 1)}(r_i) .$$
The proposition follows.
\end{proof}
\subsection{Extended diagonal}
\begin{defn}
Let $ r \in ( \frac{1}{4}, \frac{1}{2}]$, and $d > 1$.
Let $P(r)$ be given by (\ref{eq:P}), and denoted as $P(r) = \{p_i : p_i = f_i(r), 0\leq i \leq n_r\}$. 
Define the Extended Diagonal as 
$$\mathfrak{D}(r,d) = [0,p_0]^d \cup (p_0,p_1]^d\cup \cdots \cup (p_{n_r},1]^d.$$
\end{defn}

\begin{defn}
Let $d > 1$. Let $x = (x_1,x_2,\ldots,x_d) \in [0,1]^d$. Define $$s(x) = \min \{x_i: 1 \leq  i \leq d\}.$$
\end{defn}

\begin{prop}\label{diag1}
Let $r \in (\frac{1}{4},\frac{1}{2}]$, and $d > 1$. 
Let $A \subset \mathfrak{D}(r,d)$ be such that $$\mathrm{disp}(A) = r.$$ Then $$|A| \geq n_r + 1.$$
\end{prop}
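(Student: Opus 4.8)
The plan is to argue by contradiction: assume $|A| \le n_r$ and produce an axis-parallel box disjoint from $A$ whose volume exceeds $r$, contradicting $\mathrm{disp}(A) = r$. The organizing idea is the pigeonhole argument of Proposition \ref{diag0}, but adapted to the cubes $C_k$ making up $\mathfrak{D}(r,d)$ instead of single diagonal points, with the function $s(\cdot)$ playing the role the scalar coordinate played on the diagonal.

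First I would record the structural fact that drives everything: since $A \subset \mathfrak{D}(r,d) = [0,p_0]^d \cup (p_0,p_1]^d \cup \cdots \cup (p_{n_r},1]^d$, every $a \in A$ lies in exactly one block $C_k$, so all coordinates of $a$, and in particular $s(a)$, lie in the single interval of the partition (\ref{partition}) indexed by $k$. I would then single out $C_0,\ldots,C_{n_r}$ as the \emph{exploitable} blocks and note that the top block $(p_{n_r},1]^d$ is not exploitable: its edge length $1-p_{n_r}\le r$, because $n_r$ is the least index with $f_{n_r}(r)\ge 1-r$, so it cannot by itself yield volume exceeding $r$. There are $n_r+1$ exploitable blocks, hence if $|A| \le n_r$ at least one of them contains no point of $A$.

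Next I would construct the box according to which exploitable block is empty. If $C_0$ is empty, then every $a$ satisfies $s(a) > p_0 = r$, and the box $[0,c)\times[0,1]^{d-1}$ with $c = \min_{a\in A} a_1 > r$ misses $A$ and has volume $c > r$. If instead an interior block $(p_i,p_{i+1}]^d$ with $0 \le i \le n_r-1$ is empty, then every point is either \emph{low} (in some $C_m$, $m\le i$, so all coordinates $\le p_i$) or \emph{upper} (in some $C_m$, $m\ge i+2$, so all coordinates $> p_{i+1}$). When upper points exist I take $B = [0,c)\times(p_i,1]\times[0,1]^{d-2}$ with $c = \min\{s(a): a \text{ upper}\}$: the second coordinate excludes every low point and the first excludes every upper point (since $a_1 \ge s(a) \ge c$), while each upper point sits in a block strictly above the gap, giving $c > p_{i+1}$ and hence $\mathrm{Vol}(B) = c(1-p_i) > p_{i+1}(1-p_i) = r$ via the defining identity $f_{i+1}(r)(1-f_i(r)) = r$. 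When no upper point exists, all points have coordinates $\le p_i \le p_{n_r-1} < 1-r$, and the box $(\max_{a} a_1,1]\times[0,1]^{d-1}$ misses $A$ with volume $\ge 1-p_i > r$.

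I expect two points to need care. The main one is the strictness: the entire gain over a volume of exactly $r$ comes from the fact that a point sitting strictly above an empty gap has minimum coordinate strictly larger than $p_{i+1}$, and I would lean on $s(\cdot)$ together with the finiteness of $A$ to convert this into $c > p_{i+1}$. The second is bookkeeping rather than substance: the boxes above use intervals such as $(p_i,1]$, which must be rewritten as half-open intervals $[a,b)\subset[0,1]$ to lie in $\mathfrak{B}$; because every volume bound is strict and $\mathrm{disp}$ is a supremum, I can shrink each interval slightly to respect the convention while keeping the volume above $r$, so this creates no real obstacle. The only degenerate case to watch is $r=\tfrac12$, where $n_r=0$ and the assertion $|A|\ge 1$ is immediate since $\mathrm{disp}(A)<1$ forces $A\neq\emptyset$.
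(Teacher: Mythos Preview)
Your proposal is correct and follows essentially the same approach as the paper's proof: a pigeonhole argument on the blocks $C_k$ composing $\mathfrak{D}(r,d)$, followed by the explicit construction of a large box depending on which exploitable block is empty and whether any ``upper'' points exist. The only cosmetic differences are that the paper picks the \emph{smallest} empty index and, in the ``no upper points'' sub-case, uses $[0,1]\times(p_{i-1},1]\times[0,1]^{d-2}$ rather than your $(\max_a a_1,1]\times[0,1]^{d-1}$; your extra remarks on the half-open interval convention and the $r=\tfrac12$ boundary case are sound and are simply glossed over in the paper.
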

\begin{proof}
Assume that the hypothesis holds.
Let $n_r$ be as in (\ref{eq:alpha}), and let $\mathfrak{q}(r)$ be the configuration as in (\ref{eq:G}).
Define $$C_0 = [0,p_0]^d, \ C_{n_r+1} = (p_{n_r},1]^d.$$ 
For $0 < i \leq n_r $ define $$C_i = (p_{i-1},p_{i}]^d.$$
Assume toward a contradiction that $|A| \leq n_r $.
The $n_r$ points contained in $A$ must lie in the $n_r + 2$ disjoint sets in $\{C_i: 0 \leq i \leq n_r +1\}$ composing $\mathfrak{D}(r,d)$.
There exist at least two integers $i \leq n_r$, such that $ A \cap C_i = \emptyset$.

First assume that $ A \cap C_0  = \emptyset$. 
Since $A \subset \mathfrak{D}$, it follows that $s(p) > p_0$ for each $p \in A$.
Let 
$$t = \min \{s(p): p \in A  \}.$$
Construct the box $B = [0,t) \times [0,1]^{d-1}$. 
The magnitude of the components of each $p \in A$ is bounded below by $t$. It follows that  $ A \cap B = \emptyset$, however, Vol$(B) =  t > p_0 = r$.
This contradicts the assumption that $\mathrm{disp}(A) = r$.

Now assume that $ A  \cap C_0 \not = \emptyset$.
Let $ 1 \leq  i \leq n_r $ be the smallest number such that $ A \cap C_i = \emptyset.$
Assume that for all $j > i$, $A \cap C_j = \emptyset$.
Then set $t = 1.$
Construct a box 
$$B = [0,t]\times (p_{i-1},1] \times  [0,1]^{d-2}.$$
Since $A \subset \mathfrak{D}$, it follows that $B\cap A = \emptyset$, 
however, $$\textrm{Vol}(B) = (1  - p_{i-1}) > p_{i}(1 - p_{i-1}) = r.$$ 
This contradicts the assumption that $\mathrm{disp}(A) = r$.
Assume that $A \cap C_j \not = \emptyset$ for some smallest $j > i$.
Then set $$t = \min \{s(p):  p \in  A \cap C_j \}.$$
Construct a box 
$$B = [0,t)\times (p_{i-1},1] \times  [0,1]^{d-2}.$$
Since $A \subset \mathfrak{D}$, it follows that $ A  \cap B= \emptyset$, 
however, $$\textrm{Vol}(B) = t(1  - p_{i-1}) > p_{i}(1 - p_{i-1}) = r.$$ 
This contradicts the assumption that $\mathrm{disp}(A) = r$.
It follows that $$n_r + 1 \leq |A|.$$
\end{proof}
\begin{rem}
The condition $d > 1$ in Proposition \ref{diag1} is required. 
Let $r = \frac{1}{3}$ and $d = 1$. 
Note $\mathfrak{q}(r)= \{\frac{1}{3},\frac{1}{2},\frac{2}{3}\}$.
Define $$U_0 = [0,r], U_1 = (r,f(r)], U_2 = (f(r),1-r], U_3 = (1-r,1].$$
Then by definition $$\mathfrak{D}(r,1) = U_0 \cup U_1\cup U_2 \cup  U_3.$$
Let $A \subset \mathfrak{D}(r,1)$ be the set $\{\frac{1}{3},\frac{2}{3}\}$.
Then $$\mathrm{disp}(A) = \frac{1}{3}.$$
Since $|A|  = 2, $ and $$n_r + 1   =  |\mathfrak{q}(r)| =  3 $$ the proposition fails.
This example shows that Proposition \ref{diag1} only holds when $d > 1$.
\end{rem}
\begin{prop}\label{diag2}
Let $d \geq 2$. Let $r_n$ be as in (\ref{eq:r}) and let $\mathfrak{q}(r_n)$ be as in (\ref{eq:G}).
Let $A \subset \mathfrak{D}(r_n,d)$, be such that $|A| = n$, and
\begin{equation}\label{cond}
\mathrm{disp}(A) = r_n.
\end{equation}
Then $$A = \mathfrak{q}(r_n).$$
\end{prop}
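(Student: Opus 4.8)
The plan is to show that the only way to place exactly $n$ points in $\mathfrak{D}(r_n,d)$ achieving dispersion exactly $r_n$ is the canonical diagonal configuration $\mathfrak{q}(r_n)$. By Proposition \ref{diag1} we already know $|A| \geq n_r + 1 = n$, so with $|A| = n$ the configuration is \emph{extremal}: there is no slack. The key structural fact is that $\mathfrak{D}(r_n,d)$ is the disjoint union of the $n_r + 2 = n + 1$ diagonal blocks $C_0, C_1, \ldots, C_{n_r+1}$ (as defined in the proof of Proposition \ref{diag1}), and we have only $n$ points to distribute among $n+1$ blocks.

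First I would establish that each of the blocks $C_0$ and $C_{n_r+1}$ must be occupied. Indeed, the argument in Proposition \ref{diag1} shows that if $A \cap C_0 = \emptyset$ then one can build an axis-parallel box of volume strictly exceeding $r_n$ avoiding $A$, contradicting $\mathrm{disp}(A) = r_n$; by the symmetric construction (a box of the form $(t,1]\times[0,1]^{d-1}$ anchored at the top) the same holds for $C_{n_r+1}$. More generally I would argue that \emph{every} block $C_i$ must contain at least one point of $A$: if some interior block $C_i$ were empty, the box $B = [0,t)\times(p_{i-1},1]\times[0,1]^{d-2}$ used in Proposition \ref{diag1} already achieves volume $> r_n$ unless no point of $A$ lies above $C_i$, and combined with the endpoint blocks being occupied this forces a violation. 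Since there are $n+1$ blocks and only $n$ points, this seems to give a contradiction — so the real content is that exactly one block is allowed to be empty, and I must pin down which one and what that forces.

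The crux is therefore a sharpness/rigidity analysis: with $n$ points in $n+1$ blocks, exactly one block is empty, each nonempty block holds exactly one point, and the achieved dispersion must equal $r_n$ rather than exceed it. Here I would exploit that $r_n = R_n$ is precisely the value at which the nested box construction is \emph{tight} — the volumes $p_{k+1}(1-p_k) = r_n$ and $1 - f_{n_r}(r_n) = r_n$ hold with equality by Proposition \ref{pro1} and the symmetry of $\mathfrak{q}(r_n)$. Thus any point placed strictly inside a block (away from the correct diagonal coordinate $p_i \mathbf{1}$) would leave room for a slightly larger avoiding box, pushing the dispersion above $r_n$; this forces each occupied block's point to sit at the upper diagonal corner $p_i\mathbf{1}$, and it forces the empty block to be one for which the endpoint constraints can still be met, which will turn out to be consistent only with the standard placement. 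Carefully ruling out the off-diagonal placements and the wrong empty-block choices, using the minimality inherent in the equality $\mathrm{disp}(A) = r_n$, is the part I expect to demand the most care.

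The main obstacle will be the rigidity step: showing that equality in the dispersion (not merely the inequality $\mathrm{disp}(A)\geq r_n$) forces each point to the exact diagonal lattice point $f_i(r_n)\mathbf{1}$. The inequalities in Proposition \ref{diag1} are all strict ($\mathrm{Vol}(B) > r_n$), so I would need to convert those strict gaps into a statement that any deviation from $\mathfrak{q}(r_n)$ reopens a strict gap, meaning $\mathrm{disp}(A) > r_n$; this requires a continuity/monotonicity argument in the point coordinates together with the tightness of the $R_n$ values, and handling the "one empty block" bookkeeping simultaneously is where the combinatorial and metric parts must be reconciled.
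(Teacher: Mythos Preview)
Your plan has the right overall shape but contains a concrete error that derails the argument. You claim that both endpoint blocks $C_0$ and $C_{n_r+1}$ must be occupied, and more generally that \emph{every} block must be occupied; you then notice this contradicts the pigeonhole count and back off without resolving which block is actually empty. In fact $C_{n_r+1} = (p_{n-1},1]^d$ is precisely the block that \emph{is} empty for $A = \mathfrak{q}(r_n)$: the top point $p_{n-1}\mathbf{1} = (1-r_n)\mathbf{1}$ lies in $C_{n-1}$, not in $C_n$. Your proposed symmetric box $(t,1]\times[0,1]^{d-1}$ gives only volume $\geq r_n$ (with equality when $t = p_{n-1}$), so it does not yield a contradiction. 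The correct structural claim is that $C_0,\ldots,C_{n-1}$ must each be occupied --- this is exactly what the strict inequalities in the proof of Proposition~\ref{diag1} deliver --- and then, since $|A|=n$, each of these $n$ blocks holds exactly one point and the single unoccupied block is forced to be $C_n$.

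Once that is settled, the rigidity step is much more explicit than your sketch suggests; no continuity or monotonicity argument is needed. The paper proceeds by downward induction on the block index. For the top occupied block $C_{n-1}$, if its unique point $p$ is not $p_{n-1}\mathbf{1}$ then some coordinate of $p$ equals $b(p) < p_{n-1}$, and the box $(b(p),1]\times[0,1]^{d-1}$ avoids all of $A$ (every other point lies in some $C_j$ with $j \leq n-2$, hence has that coordinate $\leq p_{n-2} < b(p)$) with volume $1 - b(p) > 1 - p_{n-1} = r_n$. For a general block $C_{m-1}$, assuming $C_m,\ldots,C_{n-1}$ already hold the correct diagonal points, one uses the two-factor box $(b(p),1]\times[0,p_m)\times[0,1]^{d-2}$: it misses the lower blocks via the first coordinate and misses the already-placed points $p_m\mathbf{1},\ldots,p_{n-1}\mathbf{1}$ via the second, and has volume $p_m(1-b(p)) > p_m(1-p_{m-1}) = r_n$. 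The strict inequality comes directly from $b(p) < p_{m-1}$, so the ``tightness of $R_n$'' you invoke enters only through the identity $p_m(1-p_{m-1}) = r_n$, not through any limiting argument.
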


\begin{proof}
Assume the hypothesis. Note $A \not = \emptyset$.
For all $0 \leq i \leq n $ define $C_i$ to be as in the proof of Proposition \ref{diag1}.
Assume that  for all $0 \leq i \leq n $, $A \cap C_i \not= \emptyset$. Then it follows that for all $0 \leq i \leq n-1$,
$A \cap C_i = \{p_i \mathbf{1}\}$. Hence, $A = \mathfrak{q}(r_n)$.
We state here that for all $0 \leq i \leq n-1$, $A \cap C_{i} \not= \emptyset$. 
This follows directly from the proof of Proposition \ref{diag1}: thus we shall omit the details to avoid repetition. 
Then for all $0 \leq i \leq n- 1$, $|A \cap C_i |  =  1$. 
Now apply induction on $i$ to show that for all $1 \leq i \leq n$, $A \cap C_{n- i}  = \{p_{n - i} \mathbf{1}\}$. 

Let $i  = 1$.
Assume toward a contradiction that $$A \cap C_{n-1} \not = \{ p_{n-1} \mathbf{1}\}.$$
Let $p \in A \cap C_{n-1} \setminus \{ p_{n-1} \mathbf{1}\}$. There exists a maximum component of $p$ which is less than $p_{n-1}$. Without loss of generality, assume that this is the first component. Denote the magnitude of this component as $b(p) < p_{n-1}$. 
Construct the box $$B =(b(p),1] \times  [0,1]^{d-1} .$$
Since $A \subset \mathfrak{D}$, it follows that $A \cap B  = \emptyset$.
However, $$\mathrm{Vol}(B) = 1 - b(p) > 1 - p_{n-1} = r_n.$$ 
This contradicts (\ref{cond}).
Therefore, $A \cap C_{n-1}  = \{p_{n-1} \mathbf{1}\}$. 
Fix $0  < m \leq n-1 $. 
As an induction hypothesis assume that  for all $m \leq k \leq  n-1$, $A \cap C_{k}  = \{p_k \mathbf{1}\}$. 
Assume toward a contradiction that $A \cap C_{m-1} \not = \{p_{m-1} \mathbf{1}\}$. 
Let $p \in A \cap C_{m-1} \setminus \{ p_{m-1} \mathbf{1}\}$. There exists a maximum component of $p$ which is less than $p_{m-1}$. Without loss of generality, assume that this is the first component. Denote the magnitude of this component as $b(p) < p_{m-1}$. 
Construct the box $$B =(b(p),1] \times[0,p_m) \times  [0,1]^{d-1} .$$
Since $A \subset \mathfrak{D}$, it follows that $A \cap B  = \emptyset$.
Then $$\mathrm{Vol}(B) = p_m(1 - b(p)) > p_m(1 - p_{m-1}) =  r_n.$$
This contradicts (\ref{cond}).
Therefore, it follows that $A \cap C_{m-1}  =\{ p_{m-1} \mathbf{1}\}$.
Hence, for all $0\leq k \leq n-1$, $A \cap C_{k}  = \{p_{k} \mathbf{1}\}$.
It follows that  $A = \mathfrak{q}(r_n)$.
\end{proof}
Now we show that the bound in Corollary \ref{cor2} is sharp, given that $d$ is large enough.
Recall that for each $r \in (\frac{1}{4},\frac{1}{2}]$, there exists $n > 0$ such that 
$ r_n \leq r < r_{n-1}$, and  $$n_r + 1 =  \alpha(r)  =  n.$$
\begin{theor}
Let $r \in (\frac{1}{4},\frac{1}{2}]$. 
Then
$$N(r,d) = n_r + 1,$$
given that $d \geq {n }^{n - 1} + 1$, where $n = \alpha(r)$.
\end{theor}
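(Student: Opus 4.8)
The upper bound is already in hand: Corollary~\ref{cor1}, together with the identity $\alpha(r)=n_r+1$, gives $N(r,d)\le n_r+1$ for every $d>1$. So the entire content of the statement is the matching lower bound $N(r,d)\ge n_r+1$, i.e.\ that $n_r$ points never suffice. Concretely, the plan is to show that \emph{every} configuration $T\subset[0,1]^d$ with $|T|=n_r$ has $\mathrm{disp}(T)>r$ under the hypothesis $d\ge n^{n-1}+1$. The engine will be the extended-diagonal lower bound of Proposition~\ref{diag1}: it already handles configurations lying on an extended diagonal, and the role of the large dimension is precisely to force a two-dimensional projection of $T$ onto the extended diagonal of a coordinate plane, where that proposition can be applied.

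First I would fix the $n_r$ threshold values $p_0<p_1<\cdots<p_{n_r-1}$ from $P(r)$ and split $[0,1]$ into the $n=n_r+1$ intervals $[0,p_0],(p_0,p_1],\ldots,(p_{n_r-1},1]$. To each coordinate direction $j\in\{1,\ldots,d\}$ I attach its \emph{pattern}: the vector recording, for each of the $n_r$ points of $T$, which of these $n$ intervals its $j$-th coordinate falls into. A pattern is a function from the $n_r$ points to the $n$ intervals, so there are at most $n^{\,n_r}=n^{\,n-1}$ of them. Since $d\ge n^{n-1}+1$ exceeds this count, the pigeonhole principle produces two distinct coordinates $a\neq b$ sharing the same pattern. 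By construction this means that for \emph{every} point $x\in T$ the pair $(x_a,x_b)$ lies in a single diagonal square $J\times J$; hence the image $A'$ of $T$ under the projection $x\mapsto(x_a,x_b)$ lies on the two-dimensional extended diagonal and satisfies $|A'|\le n_r$.

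With $A'$ on the extended diagonal of $[0,1]^2$ and $|A'|\le n_r$, the argument of Proposition~\ref{diag1}, run in dimension two, yields an axis-parallel box $B_0\subset[0,1]^2$ with $B_0\cap A'=\emptyset$ and $\mathrm{Vol}(B_0)>r$: one locates an empty diagonal square $J_i\times J_i$ (there is one, since $n_r$ points cannot meet all $n$ of them) and builds $B_0$ exactly as in that proof, the volume estimate coming from the defining relation $p_i(1-p_{i-1})=f_i(r)(1-f_{i-1}(r))=r$ and from $f_{n_r-1}(r)<1-r$. Lifting $B_0$ to $B=\{x:(x_a,x_b)\in B_0\}$, with the full interval $[0,1]$ in every other coordinate, gives $B\cap T=\emptyset$ and $\mathrm{Vol}(B)=\mathrm{Vol}(B_0)>r$, so $\mathrm{disp}(T)>r$. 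As this holds for all $T$ with $|T|=n_r$, and the infimum defining $\mathrm{disp}^*(n_r,d)$ is attained by a standard compactness argument (disp is lower semicontinuous on the compact configuration space), we obtain $\mathrm{disp}^*(n_r,d)>r$, whence $N(r,d)\ge n_r+1$; combined with the upper bound this proves $N(r,d)=n_r+1$.

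The delicate point, and where I expect the real work to lie, is the mismatch between the $n$ intervals used for counting and the $n+1$ blocks in the definition of the extended diagonal $\mathfrak{D}$: to land the exponent $n^{n-1}$ demanded by the hypothesis I must coarsen the partition by merging the two top blocks into the single interval $(p_{n_r-1},1]$, and then verify that Proposition~\ref{diag1}'s construction still produces volume strictly greater than $r$ in all three cases (empty first square, empty interior square, empty merged top square), rather than quoting the proposition verbatim. A secondary technical point is the passage from ``$\mathrm{disp}(T)>r$ for each $T$'' to the strict inequality $\mathrm{disp}^*(n_r,d)>r$; I would settle this either by the attainment remark above, or by noting that there are only finitely many combinatorial types of $n_r$-point configurations on the two-dimensional extended diagonal, on each of which $\mathrm{disp}$ attains a minimum exceeding $r$.
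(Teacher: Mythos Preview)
Your plan is correct and follows the same skeleton as the paper: pigeonhole on coordinate ``block patterns'' to find two coordinates on which every point of $T$ lands in the same block, project to that coordinate plane, and invoke the two--dimensional extended--diagonal lower bound to produce a large empty box that lifts back to $[0,1]^d$.

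The differences are in the implementation, and they are worth noting. The paper partitions $[0,1]$ using the points of $P(r_{n-1})$ (the \emph{right} endpoint of the interval $[r_n,r_{n-1})$ containing $r$) rather than a truncated version of $P(r)$; this still gives $n=\alpha(r)$ intervals, but the resulting empty box has volume at least $r_{n-1}>r$, a \emph{fixed} margin independent of $T$. Consequently the paper never needs to pass from ``$\mathrm{disp}(T)>r$ for every $T$'' to the strict inequality for the infimum, whereas with your partition based on $P(r)$ the margin can be arbitrarily small and you genuinely need the lower--semicontinuity/attainment argument you sketch (which is valid). Second, the paper does the pigeonhole iteratively (one point at a time) and at the end calls Proposition~\ref{diag2} together with Proposition~\ref{diag1} to handle the possibility that the $n_r$ projected points occupy all $n_r+1$ diagonal blocks; your all--at--once pattern count is cleaner, and because you have $n_r$ projected points against $n_r+1$ coarsened blocks you always have an empty block, so a direct adaptation of the Proposition~\ref{diag1} construction suffices and Proposition~\ref{diag2} is not needed. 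In short: the paper's choice of partition buys a uniform volume gap and hence avoids compactness; your choice buys a simpler two--dimensional endgame.
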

\begin{proof}
Let $r = \frac{1}{2} = r_1$, then for all $d \geq 2$, $$N(r,d) = 1 = n_{r_1} + 1.$$
Let $r \in (\frac{1}{4},\frac{1}{2})$ be such that $r_2 \leq r < r_1$. 
Let  $d  \geq 3$. 
Define $U_0 = [0,r_1]$, $U_1 = (r_1,1]$, and
denote ${[0,1]}^d = (U_0 \cup U_1)^d$.
Assume toward a contradiction that there exists $q_1 \in [0,1]^d$ such that $$\mathrm{disp}(\{q_1\}) =  r.$$
Then either $q_1 \in [0,1]^{d-1} \times U_0$ or $q_1 \not \in [0,1]^{d-1} \times U_0$. 
Assume $q_1 \in [0,1]^{d-1} \times U_0$. 
Construct the box $B = [0,1]^{d-1} \times U_1$. 
Then $q_1 \not \in B$, and $$\mathrm{Vol}(B) = r_1 = \frac{1}{2} > r.$$
This contradicts the assumption that $\mathrm{disp}(\{q_1\}) = r.$
Assume  $q_1 \not \in [0,1]^{d-1} \times U_0$. 
Construct the box 
$B = [0,1]^{d-1} \times U_0$.
Then $q_1 \not \in B$, however, $$\mathrm{Vol}(B)  = r_1>r.$$
This contradicts the assumption that $\mathrm{disp}(\{q_1\}) = r.$
Then
$ 1 < N(r,d),$
and by Corollary \ref{cor1}, $ N(r,d) \leq  2.$ 
It follows that
$$ N(r,d) =  2.$$

Let $r$ be such that  $ r_3 \leq r < r_2 $. Since $\alpha(r) = 3$, $d \geq 10$.
Define $U_0 = [0,r_2]$, $U_1 = (r_2,1-r_2]$, $U_2 = (1-r_2,1]$. 
Select two points $q_1, q_2 \in [0,1]^d$. 
Assume toward a contradiction that $\mathrm{disp}(\{q_1,q_2\})  = r$.
The components of $q_1$ and $q_2$ are contained in the intervals $U_0,U_1,U_2$.
Denote the components of $q_1,q_2$ as $\{q_{1,i}\}_{1 \leq i \leq d}$, $\{q_{2,i}\}_{1 \leq i \leq d}$. 
Let $M_1 \geq 4$ denote the largest number of components of $\{q_{1,i}\}_{1\leq i\leq d}$ contained in a single interval, denoted as $U_{m_1}$.
Denote the corresponding indices as $\{a_i\}_{1\leq i \leq {M_1}}: = \{a_i: a_1< a_2 < \cdots~<~a_{M_1}~\}.$
Let $M_2 \geq 2$ denote the largest number of components of $\{q_{2,a_i}\}_{1 \leq i \leq {M_1}}$ contained in a single interval, denoted as $U_{m_2}$.
Denote the corresponding indices as $\{b_i :1 \leq i \leq M_2 \} \subset \{a_i\}_{1 \leq i \leq M_1}$.
It follows that $$q_{1,b_1},q_{1,b_2} \in U_{m_1},$$
$$q_{2,b_1},q_{2,b_2} \in U_{m_2}.$$ 
Project onto the components,
$$q_1 \to (0,0,\ldots,0,q_{1,b_1},0,0,\ldots,0,q_{1,b_2},0,0,\ldots,0) = q_1',$$
and
$$q_2 \to (0,0,\ldots,0,q_{2,b_1},0,0,\ldots,0,q_{2,b_2},0,0,\ldots,0) = q_2'.$$
The points are projected onto a $2$-dimensional face of $[0,1]^{d}$,
given by $$\{0\}^{b_1 - 1}\times[0,1] \times\{0\}^{ b_2 -(1 + b_1)}\times [0,1]\times \{0\}^{d - b_2}.$$
Note that
$$q_1' \in \{0\}^{b_1 - 1}\times U_{m_1} \times\{0\}^{ b_2 -(1 + b_1)}\times U_{m_1}\times \{0\}^{d - b_2},$$
and
$$q_2' \in \{0\}^{b_1 - 1}\times U_{m_2} \times\{0\}^{ b_2 -(1 + b_1)}\times U_{m_2}\times \{0\}^{d -  b_2}.$$
The components $q_1', q_2'$ are contained in $\mathfrak{D}(r_2,2)$.
By Proposition \ref{diag2} there exists $B' \in \mathfrak{B}$ such that
$$B' = \{0\}^{b_1 - 1}\times I_1 \times\{0\}^{b_2 - (1 + b_1)}\times I_2 \times \{0\}^{d -  b_2},$$ 
where $q_1',q_2' \not \in B'$. However, $\mathrm{Vol}(I_1 \times I_2)\geq r_2$.
Let $$B = [0,1]^{b_1 - 1 }\times I_1 \times [0,1]^{b_2 - (1 + b_1)}\times I_2\times [0,1]^{d -   b_2}.$$ 
It is clear that $q_1,q_2 \not \in B$.
However,  $\mathrm{Vol}(B) \geq r_2 > r$. 
This contradicts the assumption that $\mathrm{disp}(\{q_1,q_2\}) = r$.
Then $ N(r,d)> 2$, and
by Corollary \ref{cor1}, $ N(r,d) \leq  3.$ 
It follows that $$  N(r,d) = 3.$$
Fix $n > 3$, and let $r_{n+1} \leq r < r_n$. Since $\alpha(r) = n + 1$, $$d \geq {({n} + 1)} ^{{n}} + 1.$$ 
Let $q_1,q_2,\ldots,q_n$ be arbitrary points in the cube. 
Assume toward a contradiction that $\mathrm{disp(\{q_1,q_2,\ldots,q_n\})} = r.$
Define the partition $$U_0 = [0,r_n], U_1 =(r_n ,f_1(r_n)],  \ldots, U_{n+1} = (1-r_n,1].$$
 For all $1 \leq i \leq n$, denote the components of $q_i$ as $(q_{i,1},q_{i,2},\ldots,q_{i,d})$.
Denote $d_1 := {({n} + 1)} ^{{n}} + 1$.
Let $$M_1 \geq d_2:=  \frac{d_1 - 1}{({n} + 1)}  + 1$$ denote the largest number of components of
$\{q_{1,i}\}_{0<i\leq d}$ which are contained in a single interval, denoted as $U_{m_1}$. 
Denote the corresponding indices as $$\{a_{1,i}\}_{1 \leq i \leq M_1}.$$
Let $$M_2 \geq  d_{3} := \frac{d_{2} - 1}{({n} + 1)}  + 1$$ denote the largest number of components of
 $\{q_{2,a_{1,i}}\}_{1\leq i \leq M_1}$ which  are contained in a single interval, denoted as $U_{m_2}$.
Denote the corresponding indices as $\{a_{2,i}\}_{1 \leq i \leq M_2}$. 

\noindent
For all $1 \leq k \leq  {n}$
let $$M_{k } \geq  d_{k+1}: = \frac{d_{k } - 1}{({n} + 1)}  + 1$$ denote the largest number of components of
$\{q_{k,a_{{k-1},i}}\}_{1\leq i \leq M_{k-1}}$ which are contained in $U_{m_k}$.
Denote the corresponding indices as $\{a_{k,i}\}_{1 \leq i \leq M_k}$.
This guarantees that $M_{n}\geq 2$.
Define a projection on $1 \leq j \leq n$, such that $$q_j \to q_{j,a_{n,1}},q_{j,a_{n,2}} \in U_{m_j}.$$
This embeds into $\mathfrak{D}(r_n,2)$.
Then by Proposition \ref{diag2}, there exists a box $B'$ such that $q_j' \not \in B'$ with
$I_1,I_2$, such that 
$$\mathrm{Vol}(I_1 \times I_2) \geq r_n.$$
This can be extended to a box 
$$B = [0,1]^{a_{n,1} - 1} \times I_1 \times [0,1]^{ a_{n,2} - (1 + a_{n,1})}\times I_2\times [0,1]^{d_1 -  a_{n,2}}.$$ 
It is clear that $q_1,q_2,\ldots,q_n \not \in B$ and that $\mathrm{Vol}(B) \geq r_n > r$. 
This contradicts the assumption that $\mathrm{disp}(\{q_1,q_2,\ldots,q_n\}) = r$.
Then $ n < N(r,d) $,
and by Corollary \ref{cor1}, $ N(r,d) \leq n+1.$
It follows that $$  N(r,d) =  n+1.$$
\end{proof}

\section{Concluding remarks}
When $r = \frac{1}{4}$, and $d$ is small the following configurations are better than the best known bound which asymptotically is $\log(d)$. We present a configuration which is easy to describe and visualize.
\begin{prop}
Let $r \geq \frac{1}{4}$. Then $N(r,d) \leq 2d$.
\end{prop}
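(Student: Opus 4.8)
The plan is to exhibit an explicit set of $2d$ points whose dispersion is at most $\frac{1}{4}$, which by the definition of $N(r,d)$ gives the bound for all $r \geq \frac14$. The natural candidate is a two-layer grid on the diagonal: place points at the two ``centering'' heights $\frac{1}{4}$ and $\frac{3}{4}$ in each coordinate direction. Concretely, I would take
\begin{equation}\label{eq:cc}
T = \left\{ \tfrac{1}{4}\, \mathbf{e}_j + \tfrac{1}{2}\sum_{i\neq j}\mathbf{e}_i \ : \ 1 \leq j \leq d \right\} \cup \left\{ \tfrac{3}{4}\, \mathbf{e}_j + \tfrac{1}{2}\sum_{i\neq j}\mathbf{e}_i \ : \ 1 \leq j \leq d \right\},
\end{equation}
so that $|T| = 2d$, and each point agrees with the center $\tfrac12\mathbf 1$ in all but one coordinate, where it sits at either $\tfrac14$ or $\tfrac34$. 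The idea is that any box of volume exceeding $\frac14$ must be ``fat'' in at least one direction, and a fat interval in coordinate $j$ is forced to contain one of the two witness heights $\frac14,\frac34$ placed along the $j$-th axis.

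First I would set up the key geometric observation: if $B = I_1 \times \cdots \times I_d$ is a box with $\mathrm{Vol}(B) > \frac14$, then since each $|I_i| \leq 1$, there must exist a coordinate $j$ with $|I_j| > \frac12$ (otherwise every side length is at most $\frac12$ and I only need one side $\le \frac12$ together with the fact that if all sides were $\le\frac12$ the volume argument is not immediately conclusive, so the precise claim to prove is that some $|I_j|>\frac12$, which follows because an interval in $[0,1]$ of length $>\frac12$ must contain the midpoint $\frac12$, and I want a cleaner pigeonhole). More carefully, the cleaner route is: any interval $I_j=[a_j,b_j)\subset[0,1)$ with $|I_j|>\frac12$ must contain at least one of $\frac14$ or $\frac34$, because an interval of length exceeding $\frac12$ missing both of these forced heights would have to lie within one of the three gaps $[0,\frac14)$, $(\frac14,\frac34)$, $(\frac34,1]$, each of length at most $\frac12$. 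So the second step is to show $\mathrm{Vol}(B)>\frac14$ forces some $|I_j|>\frac12$; I would argue by contradiction, but I expect the honest bound needs all-but-one sides controlled, so I would instead claim that if every $|I_i|\le\frac12$ then $\mathrm{Vol}(B)\le\frac12\cdot 1^{d-1}$ is too weak — hence the real content is that the construction handles the remaining case via the center point being avoided.

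Accordingly, the third step is the case analysis on the avoiding box. Suppose $B\cap T=\emptyset$ and $\mathrm{Vol}(B)>\frac14$. I would show that there is a coordinate $j$ with $|I_j|>\frac12$, forcing $I_j$ to contain $\frac14$ or $\frac34$; say it contains $\frac34$. The two points of $T$ associated to axis $j$ are $\frac34\mathbf e_j+\frac12\sum_{i\neq j}\mathbf e_i$ and its $\frac14$ counterpart, and to avoid the first of these, $B$ must fail to contain it, meaning some coordinate $i\neq j$ has $\frac12\notin I_i$. Iterating, I control every coordinate $i\neq j$: each such $I_i$ must avoid $\frac12$, hence lies in $[0,\frac12)$ or $(\frac12,1]$, so $|I_i|\le\frac12$ for all $i\neq j$. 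Then $\mathrm{Vol}(B)\le |I_j|\cdot\prod_{i\neq j}|I_i|\le 1\cdot(\frac12)^{d-1}\le\frac14$ whenever $d\ge 3$, contradicting $\mathrm{Vol}(B)>\frac14$, and the cases $d=1,2$ are checked directly.

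The main obstacle is getting the case analysis tight enough that the product of the ``thin'' side lengths actually drops below $\frac14$: the crude bound $(\frac12)^{d-1}$ only kicks in for $d\ge 3$, and the subtlety is that a box avoiding $\frac12$ in coordinate $i$ could still be long on the other side of $\frac12$. The fix is to use both witness heights $\frac14$ and $\frac34$ in the fat coordinate simultaneously and a sharper pigeonhole partition of $[0,1]$ into the four blocks $[0,\frac14),[\frac14,\frac12),[\frac12,\frac34),[\frac34,1)$ so that avoiding all $2d$ points forces each side into a single quarter-length block except in the one fat direction; balancing this bookkeeping so that the volume estimate closes for all $d>1$ (handling the low-dimensional cases $d=1,2$ separately) is where the care is needed.
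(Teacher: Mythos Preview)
Your point set is exactly the one the paper uses, so the construction is fine. The gap is in the ``iterating'' step. From $B\cap T=\emptyset$ and the single point $\tfrac34\mathbf e_j+\tfrac12\sum_{i\ne j}\mathbf e_i\notin B$ you only get that \emph{some} $i\ne j$ has $\tfrac12\notin I_i$; it does not follow that \emph{every} $i\ne j$ has this property, so the bound $\mathrm{Vol}(B)\le(\tfrac12)^{d-1}$ is unjustified. Concretely, in $d=3$ the box $[0,\tfrac12)\times[0,1)\times[0,1)$ avoids $T$ (the first coordinate misses $\tfrac12$, which already kills every point of $T$ except the two with $\tfrac14,\tfrac34$ in the first slot, and those are excluded because $\tfrac12\in I_1$ fails), yet two of its sides have length $1$.

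The clean repair is to case on $S=\{i:\tfrac12\in I_i\}$ rather than to ``iterate''. If $|S|\le d-2$ then at least two factors satisfy $|I_i|\le\tfrac12$, giving $\mathrm{Vol}(B)\le\tfrac14$. If $|S|=d$ then for every $k$ the two points of $T$ attached to axis $k$ force $\tfrac14,\tfrac34\notin I_k$, so each $I_k\subset(\tfrac14,\tfrac34)$ and $\mathrm{Vol}(B)\le(\tfrac12)^d\le\tfrac14$. If $|S|=d-1$ with missing index $j$, then only the two points attached to axis $j$ are at risk, and avoiding them forces $\tfrac14,\tfrac34\notin I_j$; together with $\tfrac12\notin I_j$ this gives $|I_j|\le\tfrac14$ and hence $\mathrm{Vol}(B)\le\tfrac14$. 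This works uniformly for all $d\ge2$ and removes the ad hoc low-dimensional patching you were worried about.

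For comparison, the paper does not argue directly: it lists the thirteen maximal $T$-avoiding boxes in dimension $2$, checks each has area $\tfrac14$, and then asserts that in dimension $d$ every $K$-avoiding box sits inside $[0,1]^{d-2}$ times one of those thirteen planar boxes. Your direct approach, once corrected via the case split on $S$, is arguably cleaner and avoids the enumeration.
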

 
\begin{proof}
Let $d = 2$ let 
$$K = \big \{(\frac{1}{2},\frac{1}{4}),(\frac{1}{2},\frac{3}{4}),(\frac{1}{4},\frac{1}{2}),(\frac{3}{4},\frac{1}{2}) \big \}.$$
Every box $B \cap K = \emptyset$ inside $[0,1]^2$  is contained in one of the following\\
\begin{equation}
   \begin{matrix} \label{box}
   [0,\frac{1}{2}) \times [0,\frac{1}{2}), & [0,\frac{1}{2}) \times (\frac{1}{2},1], & (\frac{1}{2},1]\times [0,\frac{1}{2}), & (\frac{1}{2},1] \times (\frac{1}{2},1]\\
   
   (\frac{1}{4},\frac{3}{4}) \times (\frac{1}{4},\frac{3}{4}), & [0,\frac{1}{4}) \times [0,1], & (\frac{3}{4},1] \times [0,1], & [0,1] \times ( \frac{3}{4},1] \\
   
   [0,1] \times [0,\frac{1}{4}) & (\frac{1}{4},\frac{1}{2}) \times [0,1] &  [0,1] \times (\frac{1}{4},\frac{1}{2}) & (\frac{1}{2},\frac{3}{4}) \times [0,1]\\
   [0,1] \times (\frac{1}{2},\frac{3}{4})
   
   \end{matrix} 
\end{equation}
This gives the result in the $2$ dimensional case.
Let $d > 2$. 
Let 
$$K_1 = \bigg\{ \bigg( \frac{1}{2},\frac{1}{2},\ldots, \frac{1}{2},M_i, \frac{1}{2}, \ldots , \frac{1}{2} \bigg): 
M_i = \frac{1}{4},\  1 \leq i \leq d \bigg\}$$
$$ K_2 = \bigg\{ \bigg(\frac{1}{2},\frac{1}{2},\ldots, \frac{1}{2},M_i, \frac{1}{2}, \ldots , \frac{1}{2} \bigg)
: M_i =  \frac{3}{4},\  1 \leq i \leq d  \bigg\}.$$
Let $K = K_1 \cup K_2$. 
Then each box in $B \in [0,1]^d$ such that $B \cap K = \emptyset$ is contained in a product of $d - 2$ intervals $[0,1]$ with one of the boxes in (\ref{box}).
Each of the boxes have volume $\frac{1}{4}$. Therefore, $$\mathrm{disp}(K) = \frac{1}{4},$$ and $|K| = 2d.$
\end{proof}

\subsection{Acknowledgments}
The author would like to thank his supervisor A.E. Litvak for introducing him to the dispersion problem, for his valuable  expertise, and helpful comments while the author was working on this note. The author would also like to thank the reviewers for their comments and suggestions.

\newpage

\address
\end{document}